\newtheorem{theorem}{Theorem}[section]
\newtheorem{proposition}[theorem]{Proposition}
\newtheorem{corollary}[theorem]{Corollary}
\newtheorem{fact}[theorem]{Fact}
\newtheorem{conjecture}[theorem]{Conjecture}
\newtheorem{observation}[theorem]{Observation}
\newtheorem{falsetheorem}[theorem]{\lq{False Theorem}\rq}
\theoremstyle{definition}
\newtheorem{definition}[theorem]{Definition}
\newtheorem{example}[theorem]{Example}
\newtheorem*{example*}{Example}
\theoremstyle{remark}
\newtheorem{remark}[theorem]{Remark}
\newtheorem*{remarkwn}{Remark}
\numberwithin{equation}{section}
\newcommand{\Cp}{C_+}
\newcommand{\yato}{{\downarrow{\kern-7pt{\raise8pt\hbox{$>$}}}}}
\title{Branching problems of Zuckerman derived functor modules
\\[2ex]
\textit{\normalsize 
Dedicated to Gregg Zuckerman on the occasion of his 60th birthday}
}
\author{Toshiyuki Kobayashi\footnote{Partially supported by
Institut des Hautes \'{E}tudes Scientifiques, France and
        Grant-in-Aid for Scientific Research (B) (22340026), Japan
        Society for the Promotion of Science}}
\date{}
\begin{document}
\maketitle

\begin{abstract}
We discuss recent developments on branching problems of irreducible
unitary representations $\pi$ of real reductive groups when
restricted to reductive subgroups.
Highlighting the case where the underlying $(\mathfrak{g},K)$-modules
of $\pi$ are isomorphic to Zuckerman derived functor modules
$A_{\mathfrak{q}}(\lambda)$, 
we show various and rich features of branching laws 
 such as infinite multiplicities,
irreducible restrictions, multiplicity-free restrictions,
and discrete decomposable restrictions.
We also formulate a number of conjectures.
\end{abstract}

\noindent
\textit{Keywords and phrases:}
branching law, symmetric pair, Zuckerman derived functor module,
unitary representation, multiplicity-free representation

\medskip
\noindent
\textit{2010 MSC:}
Primary
22E46; 
Secondary
53C35. 

\section{Introduction}
Zuckerman derived functor is powerful algebraic machinery
 to construct irreducible unitary representations
 by cohomological parabolic induction.  
The $({\mathfrak {g}},K)$-modules $A_{\mathfrak{q}}(\lambda)$, 
 referred to as Zuckerman derived functor modules,
 give a far reaching generalization
 of the Borel--Weil--Bott construction
 of irreducible finite dimensional representations
 of compact Lie groups.  
They include Harish-Chandra's discrete series representations
 of real reductive Lie groups
 as a special case,
 and may be thought of
 as a geometric quantization of elliptic orbits
 (see Fact \ref{fact:ZDF}).  

{\textit{Branching problems}}
 in representation theory ask 
 how irreducible representations
 $\pi$ of a group $G$ decompose when restricted
 to a subgroup $G'$.

The subject of our study is branching problems 
 with emphasis on the setting when
 $(G,G')$ is a reductive symmetric pair
 (Subsection \ref{subsec:symm}),
 and when $\pi$ is the unitarization of a
Zuckerman derived functor module $A_{\mathfrak{q}}(\lambda)$.
We  see 
 that branching problems in this setting include a wide
range of examples:
a very special case is equivalent to finding the
Plancherel formula for homogeneous spaces 
(e.g. Proposition \ref{prop:Mackey} and Example~\ref{ex:GL})
 and another special case is of
combinatorial nature
(e.g.\ the Blattner formula).  

In this article, we give new perspectives 
 on branching problems
 by revealing the following surprisingly rich and various features:
\begin{itemize}
\item[$\bullet$]
The multiplicities may be infinite 
(Section \ref{sec:wild})
 and may be one
(Section \ref{subsec:mfconj}).  
\item[$\bullet$]
The restriction may stay irreducible
 (Section \ref{sec:aibl}).  
\item[$\bullet$]
The spectrum may be
 purely continuous and may be discretely
decomposable
 (Section \ref{sec:deco}).
\end{itemize}

Finally,
 we present a number of open problems
 that might be interesting for further study
 (see Conjectures~\ref{conj:mf}, \ref{conj:bdd}, \ref{conj:deco}, and \ref{conj:AV}).  

This article is based on the talk
 presented at the conference
\lq\lq Representation Theory and Mathematical Physics\rq\rq\
 in honor of Gregg Zuckerman's 60th birthday
 at Yale University on October 2009.  
The author is one of those who have been inspired by
Zuckerman's work, and would like to express his sincere gratitude to the organizers
 of the stimulating conference,
Professors J. Adams, M. Kapranov, B. Lian, and S. Sahi
for their hospitality.

\section{Wild aspects of branching laws}
\label{sec:wild}

\subsection{Analysis and synthesis}
\label{sec:irreddeco}

One of the most distinguished feature of \textit{unitary} representations 
is that they are always built up from the smallest objects, namely,
irreducible ones.
For a locally compact group $G$, 
 we denote by $\widehat G$
 the set of equivalence classes of irreducible
unitary representations of $G$, 
 endowed with the Fell topology.

\begin{fact}
[Mautner--Teleman]
\label{fact:irreddeco}
Every unitary representation $\pi$ of a locally compact group $G$
is unitarily equivalent to a
direct integral of irreducible unitary representations:
\begin{equation}\label{eqn:3.1.1}
\pi \simeq \int_{\widehat{G}}^{\oplus} n_{\pi}(\sigma) \sigma
                  \, d\mu(\sigma).
\end{equation}
Here, $d\mu$ is a Borel measure on $\widehat{G}$,
$n_{\pi} : \widehat{G} \to \mathbb{N} \cup \{ \infty \}$
is a measurable function,
and
$n_\pi (\sigma) \sigma$ stands for the multiple of an
irreducible unitary representation $\sigma$ with multiplicity
$n_\pi(\sigma)$.
\end{fact}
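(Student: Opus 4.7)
The plan is to obtain \eqref{eqn:3.1.1} by applying von Neumann's direct integral theory to the center of the commutant of $\pi(G)$, and then refining the resulting decomposition from factor representations to irreducible ones. After the standard reduction (passing to a cyclic subrepresentation generated by a countable dense family and iterating on the orthogonal complement) one may assume $\pi$ acts on a separable Hilbert space $\mathcal{H}$. I would form the von Neumann algebra $M := \pi(G)''$ together with its center $Z := M \cap M'$; since $Z$ is commutative, the spectral theorem realizes it as $L^\infty(X,\mu)$ for some standard $\sigma$-finite measure space $(X,\mu)$.

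Next I would invoke von Neumann's disintegration theorem to obtain a measurable field $x \mapsto (\mathcal{H}_x, \pi_x)$ such that
\[
\mathcal{H} \cong \int_X^{\oplus} \mathcal{H}_x \, d\mu(x),
\qquad
\pi \cong \int_X^{\oplus} \pi_x \, d\mu(x),
\]
with $Z$ corresponding to the algebra of diagonal operators. Diagonalising the center in this way forces each fiber $\pi_x$ to be a \emph{factor} representation, meaning the center of its commutant is trivial. Measurability of the field is guaranteed by the second countability of $G$ together with separability of $\mathcal{H}$, which is the standing hypothesis implicit in Fact~\ref{fact:irreddeco}.

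The main obstacle, and the essential contribution of Mautner and Teleman, is that a factor representation of a general locally compact group need not itself be a multiple of an irreducible representation. To address this one equips $\widehat{G}$ with its Mackey--Borel structure, making it a standard Borel space, and constructs a measurable map $X \to \widehat{G}$ so that, after a further disintegration if necessary, $\pi_x$ becomes a multiple $n_\pi(\sigma_x)\,\sigma_x$ of the irreducible representation $\sigma_x$ indexed by $x$. Pushing $\mu$ forward along this map and collecting multiplicities into a measurable function $n_\pi \colon \widehat{G} \to \mathbb{N} \cup \{\infty\}$ then yields the formula~\eqref{eqn:3.1.1}. I expect the delicate point to be the non-type-I regime: there the decomposition into factors is no longer unique and one must be content with producing some measurable selection, appealing to the Glimm--Effros analysis of the Borel structure of $\widehat{G}$ to guarantee that such selections exist on the standard Borel space of irreducible classes.
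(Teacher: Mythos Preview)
The paper does not give a proof of Fact~\ref{fact:irreddeco}; it is stated as a classical result attributed to Mautner and Teleman and used as background, so there is nothing in the paper to compare your argument against. Your outline is the standard route to Mautner's theorem via central disintegration of the von Neumann algebra $\pi(G)''$, and as a sketch it is broadly correct for the separable case.

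One point is off: you assert that second countability of $G$ and separability of $\mathcal{H}$ are ``the standing hypothesis implicit in Fact~\ref{fact:irreddeco}''. They are not. The whole reason Teleman's name is attached is that he removed these hypotheses, proving the decomposition for arbitrary locally compact groups on arbitrary Hilbert spaces. Your reduction step (cyclic subrepresentations and Zorn) does not by itself reduce to separable fibers unless $G$ is second countable, so as written your sketch yields Mautner's result but not Teleman's extension. If you want to match the full statement you would need to indicate how the disintegration theory is carried out in the non-separable setting, which is exactly the part Teleman supplied.
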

The decomposition \eqref{eqn:3.1.1} is unique if $G$ is
of type~I
in the sense of von Neumann algebras.
Reductive Lie groups are of type I.
Then the \textit{multiplicity function} $n_\pi$ is well-defined up to
a measure zero set with respect to $d\mu$.
We say
 that $\pi$ has a \textit{uniformly bounded multiplicity}
 if there is $C>0$
 such that $n_\pi(\sigma)\le C$ almost everywhere;
 $\pi$ is \textit{multiplicity-free} if $n_\pi(\sigma)\le 1$
almost everywhere,
or equivalently,
if the ring of continuous $G$-endomorphisms of $\pi$ is commutative.

\subsection{Branching laws and Plancherel formulas}

Suppose that $G'$ is a closed subgroup of $G$.
Here are two basic settings
 where the problem of decomposing unitary representations
 arises naturally.
\begin{itemize}
\item[1)]
(Induction $G' \uparrow G$)
\textit{Plancherel formula}.

For simplicity,
assume that there exists a $G$-invariant Borel measure on the
homogeneous space $G/G'$.
Then the group $G$ acts unitarily on the Hilbert space
$L^2(G/G')$ by translations.
The irreducible
decomposition of the regular representation of $G$ on
$L^2(G/G')$
is called the {\textit{Plancherel formula}} for $G/G'$.  
\item[2)]
(Restriction $G \downarrow G'$)
\textit{Branching laws}.

Given an irreducible unitary representation $\pi$ of $G$.
By the symbol $\pi|_{G'}$,
we think of $\pi$ as a representation of the subgroup $G'$.
The {\textit{branching law}} of the restriction $\pi|_{G'}$ means the formula of
 decomposing $\pi$ into irreducible representations of $G'$.
Special cases of branching laws include 
 the classical Clebsch--Gordan formula,
 or more generally,
the decomposition of the tensor product of two irreducible
representations
(\textit{fusion rule}),
and the Blattner formula,
 etc.  
\end{itemize}

\subsection{Symmetric pairs}
\label{subsec:symm}

We are particularly interested in the branching laws
 with respect to reductive symmetric pairs.  
Let us fix some notation.  

Suppose $\sigma$ is an involutive automorphism of a Lie group $G$.
We denote by
$G^\sigma := \{ g \in G: \sigma g = g \}$,
the group of fixed points by $\sigma$.
We say that $(G,G')$ is a \textit{symmetric pair} if $G'$ is an open
subgroup of $G^\sigma$.
Then the homogeneous space $G/G'$ becomes an affine symmetric space with
respect to the canonical $G$-invariant affine connection.
The pair $(G,G')$ is said to be a \textit{reductive symmetric pair} 
if $G$ is reductive.  
Further,
 if $G'$ is compact
 then $G/G'$ becomes a Riemannian symmetric space.  

\begin{example}\label{ex:symm}
1)
(group case)\enspace
Let $\grave{}G$ be a Lie group,
$G := \grave{}G \times \grave{}G$
the direct product group,
and $\sigma \in \operatorname{Aut}(G)$ be defined as 
$\sigma(x,y) := (y,x)$.
Then
$G^\sigma \equiv \operatorname{diag}(\,\grave{}G) 
 := \{ (x,x): x \in \grave{}G \}$.
Since the homogeneous space $G/G^\sigma$ is diffeomorphic to
$\grave{}G$, 
we refer to the symmetric pair
$(G,G^\sigma) = (\,\grave{}G \times \grave{}G, \operatorname{diag}(\,\grave{}G))$
as a \textit{group case}.

2)\enspace
The followings are chains of reductive symmetric pairs:
\begin{align*}
&GL(2n,\mathbb{H}) \supset GL(n,\mathbb{C}) \supset 
 GL(n,\mathbb{R}) \supset GL(p,\mathbb{R}) \times GL(q,\mathbb{R})
\quad (p+q=n), 
\\
&O(4p,4q) \supset U(2p,2q) \supset Sp(p,q) \supset U(p,q)
 \supset O(p,q).  
\end{align*}
\end{example}

\subsection{Finite multiplicity theorem of van den Ban}

Let $(G, G')$ be a reductive symmetric pair. 

The irreducible decomposition \eqref{eqn:3.1.1} is well-behaved
 for the induction $G'\uparrow G$, 
namely, 
for the Plancherel formula of the symmetric space $G/G'$:
\begin{fact}[van den Ban \cite{xban}]
\label{fact:Ban}
Suppose $(G,G')$ is a reductive symmetric pair.
Then the regular representation $\pi$ on 
$L^2(G/G')$ has a uniformly bounded multiplicity.
\end{fact}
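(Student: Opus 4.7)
The plan is to bound the multiplicity of each irreducible constituent by the dimension of a space of $G'$-fixed distribution vectors, and then to apply a uniform geometric bound on these dimensions.

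First, I would show that for almost every $\sigma \in \widehat{G}$ appearing in the Plancherel decomposition of $\pi = L^2(G/G')$, the multiplicity $n_\pi(\sigma)$ is dominated by $\dim(\sigma^{-\infty})^{G'}$, the dimension of the space of $G'$-invariant distribution vectors of $\sigma$. The estimate comes from a Frobenius-type reciprocity for unitary induction from $G'$ to $G$: restricting a $G$-equivariant map $\sigma \to L^2(G/G')$ to smooth vectors and evaluating at the base point $eG'$ produces a $G'$-invariant continuous functional on $\sigma^\infty$, hence an element of $(\sigma^{-\infty})^{G'}$; this correspondence is injective by cyclicity of $\sigma^\infty$ in $\sigma$. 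In the direct-integral setting one works fiberwise with respect to the Plancherel measure, so the inequality is meant $\mu$-almost everywhere.

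Second, I would invoke a uniform finiteness theorem for $G'$-fixed distribution vectors: there exists a constant $C = C(G,G')$, depending only on the pair, such that
\[
\dim(\sigma^{-\infty})^{G'} \le C
\]
for every irreducible admissible Hilbert representation $\sigma$ of $G$. The key geometric input is the Matsuki–Oshima double coset theorem, which asserts that $G' \backslash G / P_{\min}$ is finite for any minimal parabolic subgroup $P_{\min}$; combined with a boundary value analysis (the inverse to the Poisson transform), a $G'$-invariant distribution vector on $\sigma$ is controlled by finitely many asymptotic exponents attached to these orbits, yielding a bound on $\dim(\sigma^{-\infty})^{G'}$ in terms of combinatorial data intrinsic to $(G,G')$.

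The main obstacle is the second step, namely the \emph{uniformity} of the bound in $\sigma$, rather than merely finiteness for each fixed $\sigma$. One must track the boundary value map globally and show that the discrete data recording leading exponents along each orbit in $G' \backslash G / P_{\min}$ determines the full distribution vector up to a controlled ambiguity whose size is independent of $\sigma$. Once this uniformity is established, Fact~\ref{fact:Ban} follows with $C$ chosen as (at most) the product of the number of open $G'$-orbits on $G/P_{\min}$ and a bound coming from the little Weyl group of the symmetric space; the first step then converts this into the desired uniform bound on $n_\pi(\sigma)$.
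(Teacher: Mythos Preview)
The paper does not supply its own proof of this statement: Fact~\ref{fact:Ban} is quoted as a result from the literature with a bare citation to van den Ban~\cite{xban}, and the surrounding text moves on immediately to use it. So there is no in-paper argument to compare against.

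Your sketch is a faithful outline of the strategy in the cited source. Van den Ban's bound indeed proceeds by controlling the multiplicity through the dimension of the space of $G'$-fixed distribution vectors on the principal series (equivalently, $G'$-invariant eigendistributions for the algebra of invariant differential operators on $G/G'$), and the uniform finiteness of that dimension is obtained from an asymptotic/boundary-value analysis along the finitely many $G'$-orbits on $G/P_{\min}$. Your identification of the uniformity in $\sigma$ as the crux is accurate: finiteness for a fixed $\sigma$ follows from holonomicity-type arguments, but bounding the dimension independently of the spectral parameter is precisely what requires the orbit-by-orbit control of leading exponents that you describe. One caution on your first step: the passage from the abstract direct-integral multiplicity $n_\pi(\sigma)$ to $\dim(\sigma^{-\infty})^{G'}$ is not a one-line Frobenius reciprocity in the continuous spectrum; in van den Ban's treatment this is handled by working with joint eigenspaces of $\mathbb{D}(G/G')$ rather than with abstract intertwining maps into $L^2$, which sidesteps the measurability and cyclicity issues you allude to.
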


\subsection{Plancherel formulas v.s. branching laws}
\label{subsec:Mackey}

Fairly many cases of the Plancherel formula for $L^2(G/G')$
 treated in Fact \ref{fact:Ban}
 can be realized as a special example
 of branching laws
 of the restriction of irreducible unitary representations of other groups.  
For example,
 we recall from \cite[Propositions 6.1, 6.2]{xkdecoalg}
 and \cite[Theorem 36]{xrims40}:
\begin{proposition}
\label{prop:Mackey}
Let $G/G'$ be a reductive symmetric space.  
Then the regular representation of $G$
 on $L^2(G/G')$ is unitarily equivalent to 
the restriction $\pi|_G$
 for some irreducible unitary representation $\pi$
 of a reductive group $\widetilde G$  containing $G$ as its subgroup
 if $(G,G')$ fulfills one of the following conditions:
\begin{itemize}
\item[\upshape(A)]
$G'$ is compact and the crown domain $D$
 of the Riemannian symmetric space $G/G'$
 is a Hermitian symmetric space,

or

\item[\upshape(B)]
$G'/Z_G$ has a split center.  
Here $Z_G$ stands for the center of $G$.
\end{itemize}
\end{proposition}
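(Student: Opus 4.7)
My plan is to handle the two cases by a common template: in each case I construct an overgroup $G\subset \widetilde G$ together with an irreducible unitary representation $\pi$ of $\widetilde G$ admitting a geometric model on some $\widetilde G$-space $X$, such that $G/G'$ sits as the unique open $G$-orbit in $X$ and the model of $\pi$ restricts to the $L^2$-model of the regular representation on this orbit. Once such $(\widetilde G,\pi,X)$ is produced, the identification $\pi|_G\simeq L^2(G/G')$ follows by checking that the complement of the open orbit is negligible and that the inner products match on the dense subspace of compactly supported data.

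For case (B), the hypothesis that $G'/Z_G$ has a split center allows $G'$ to be realized as (essentially) the Levi factor of a real parabolic subgroup $\widetilde P=\widetilde L\widetilde N$ of a suitably enlarged reductive group $\widetilde G\supset G$. I would take $\widetilde G$ so that the natural $G$-action on $\widetilde G/\widetilde P$ has $G/G'$ as its unique open orbit, and then choose $\pi=\operatorname{Ind}_{\widetilde P}^{\widetilde G}(\chi)$ to be a unitarily induced degenerate principal series attached to an appropriate unitary character $\chi$ of $\widetilde P$. The irreducibility of $\pi$ has to be arranged by a careful choice of $\chi$ on the center of $\widetilde L$, after which a Bruhat-type orbit analysis of the $G$-action on $\widetilde G/\widetilde P$, combined with a Mackey-type restriction argument, yields the desired unitary equivalence.

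For case (A), the compactness of $G'$ makes $G/G'$ a Riemannian symmetric space, and its crown domain $D$ is assumed to be Hermitian symmetric. I would realize $D\cong \widetilde G/\widetilde K$ for an appropriate overgroup $\widetilde G$ acting on $D$ by biholomorphisms, so that $G/G'$ embeds as a totally real submanifold and $\widetilde G$ acts on $L^2(G/G')$ via holomorphic extension of data to $D$. The candidate $\pi$ is then a unitary highest weight representation of $\widetilde G$ realized on a Hardy-type space of holomorphic sections on $D$; the boundary value map along $G/G'$ should provide a $G$-equivariant isometry, and the Plancherel formula for $G/G'$ should match the branching of $\pi$ along $G$ via a Faraut--Koranyi style spherical expansion.

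The main obstacle I expect is in case (A): choosing the precise unitary highest weight module $\pi$ on $\widetilde G$ whose boundary value map is \emph{surjective} onto $L^2(G/G')$, rather than embedding into a proper subspace, and verifying the isometry constant. In case (B) the difficulty is mostly bookkeeping, but one still has to pin down the open $G$-orbit in $\widetilde G/\widetilde P$, select $\chi$ so that $\pi$ is irreducible, and match the resulting spectral decomposition with the abstract Plancherel formula for $G/G'$; uniqueness of the direct integral decomposition from Fact~\ref{fact:irreddeco} in the type~I setting then closes the argument.
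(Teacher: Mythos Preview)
Your overall template is right and, for case~(A), essentially coincides with the paper's argument: take $\widetilde G$ to be the biholomorphic automorphism group of the crown domain $D$, realize a scalar holomorphic discrete series $\pi$ of $\widetilde G$ on holomorphic sections over $D$, and restrict to the totally real submanifold $G/G'$. The paper notes (citing Howe) that the resulting $G$-module is not literally $L^2(G/G')$ but is unitarily equivalent to it, which addresses exactly the surjectivity/isometry worry you flag.

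For case~(B), however, your construction is misdirected in one important respect. The split-center hypothesis on $G'/Z_G$ is what makes $G'$ a Levi factor of a parabolic $P$ \emph{of $G$ itself}, not of some larger $\widetilde G$; your sentence ``$G'$ \dots\ the Levi factor of a real parabolic subgroup $\widetilde P$ of a suitably enlarged $\widetilde G$'' conflates the stabilizer in the open orbit with the Levi of $\widetilde P$. The paper's concrete choice is $\widetilde G=G\times G$ with $G$ embedded diagonally, $\widetilde P=P\times \bar P$, and $\pi=\pi_1\boxtimes\pi_1^\vee$ where $\pi_1=\operatorname{Ind}_P^G(\chi)$ is a degenerate unitary principal series. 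Then $\pi|_{\operatorname{diag}G}\simeq \pi_1\otimes\pi_1^\vee$, and Mackey theory applied to the diagonal $G$-action on $G/P\times G/\bar P$ identifies this with $L^2(G/G')$ via the open orbit, whose isotropy is $P\cap\bar P=G'$. Once you substitute this specific $(\widetilde G,\widetilde P,\pi)$ into your template, your Bruhat/Mackey outline goes through; but as written, your proposal leaves the existence of an appropriate single parabolic $\widetilde P\subset\widetilde G$ with the required open-orbit property unjustified, and in fact the natural realization is the product-group one rather than a simple overgroup.
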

\begin{remark}\label{rem:Pb}
\begin{enumerate}
\item[1)]
Most Riemannian symmetric 
 pairs $(G,G')$ satisfy
 the assumption (A) 
 (see \cite{xkrst} for details).  

\item[2)]
As the proof below shows,
\[
  \widetilde G \supset G \supset G'
\]
is a chain of reductive symmetric pairs.  

\item[3)]
We can take $\pi$ to be the unitarization of some
$A_{\mathfrak{q}}(\lambda)$ in (A) and also in (B) when $G$ is a
complex reductive Lie group.

\item[4)]
There are some more cases other than (A) or (B) for which
the conclusion of Proposition~\ref{prop:Mackey} holds.
For instance, see Example~\ref{ex:GL} 
  for the group case $L^2(GL(n, \mathbb C))$
  and also for a more general case
  $L^2(GL(2n, \mathbb R)/GL(n, \mathbb C))$.

.
\end{enumerate}
\end{remark}

\begin{proof}
[Outline of the proof]
The choice of $\pi$ and 
 $\widetilde G$ depends on each case (A) and (B).  

(A) We take $\widetilde G$ to be the automorphism group
 of $D$,
and $\pi$ to be any holomorphic discrete series representation
 of $\widetilde G$ of scalar type.  
Then $\pi$ is realized in the Hilbert space
 consisting of square integrable,
 holomorphic sections of a $G$-equivariant holomorphic line bundle over
$D$.  
Since holomorphic sections are determined uniquely
 by the restriction to the totally real submanifold $G/G'$,  
 we get a realization of the restriction 
 $\pi|_G$ in a certain Hilbert subspace of 
 ${\mathcal {A}}(G/G')$, 
 which itself is not $L^2(G/G')$
 but is unitarily equivalent to the regular representation 
 on $L^2(G/G')$
 (see \cite{xhower}).  

(B) Let $P$ be a maximal parabolic subgroup of $G$
 whose Levi part is $G'$.  
Take $\widetilde G$ to be the direct product $G \times G$,
 and $\pi$ to be the outer tensor product representation
 $\pi_1 \boxtimes \pi_2$
 where $\pi_1$ is a degenerate unitary principal series
representation induced from a unitary character of $P$
 and $\pi_2$ is the contragredient representation of $\pi_1$.  
Then apply the Mackey theory.  
\end{proof}

\begin{example}
\label{ex:Mackey}
1)
The regular representation  on
 $L^2(G/G')=L^2(GL(n,{\mathbb{R}})/O(n))$
 is unitarily equivalent to the restriction
 of a holomorphic discrete series representation
 of
 $\widetilde G :=Sp(n,{\mathbb{R}})$ to $G$.

2)
The regular representation on
 $L^2(GL(n,{\mathbb{R}})/GL(p,{\mathbb{R}}) \times GL(q,{\mathbb{R}}))$
 with $(p+q=n)$
 is unitarily equivalent to the restriction of
a degenerate principal representation of
$\widetilde G:=GL(n,{\mathbb{R}}) \times GL(n,{\mathbb{R}})$
to $G$ (namely, to the tensor product representation).  
\end{example}

\subsection{Wild aspects of branching laws}
\label{subsec:2.5}

Retain our assumption
 that $(G,G')$ is a reductive symmetric pair.

Proposition \ref{prop:Mackey} suggests that branching problems
 include a wide range of examples.
In fact,
 while the `good behavior' in Fact \ref{fact:Ban}
 for the Plancherel formula of the symmetric space $G/G'$,
the branching law of the restriction $\pi|_{G'}$ does not behave well
in general.
Even when $\pi_K$ is a Zuckerman derived functor module
 $A_{\mathfrak {q}}(\lambda)$, 
 we cannot expect:
\begin{falsetheorem}
\label{thm:false}
Let $(G,G')$ be a reductive symmetric pair,
 and $\pi$ an irreducible unitary representation of $G$.  
Then the multiplicities 
 of the discrete spectrum
 in the branching laws $\pi|_{G'}$ are finite.  
\end{falsetheorem}

\begin{remark}
Such a multiplicity theorem holds for reductive symmetric pairs 
 $(G, G')$
under the assumption that the restriction $\pi|_{G'}$ is infinitesimally 
 discretely decomposable 
 in the sense of Definition \ref{def:decores}
(cf. \cite{xkdecoass, deco-euro}).
A key to the proof is 
 Theorem \ref{thm:sufadm} on a criterion 
of $K'$-admissibility and 
Corollary \ref{cor:necdeco}
 on an estimate of the associated variety. 
See Remark \ref{rem:decosymm} 
 for the case $\pi_K \simeq A_{\mathfrak {q}}(\lambda)$.    
\end{remark}
Before giving a counterexample to (false) `Theorem' \ref{thm:false}
 about the discrete spectrum,
we discuss an easier case, namely,
 an example of infinite multiplicities
 in the continuous spectrum
 of the branching law: 
\begin{proposition}
[$G \times G \downarrow \operatorname{diag}G$ ]
\label{prop:minf}
{\upshape{(Gelfand--Graev \cite{xgg}.)}}
If $\pi_1$ and $\pi_2$ are two unitary
 principal series representations 
 of\/ $G = SL(n,\mathbb{C})$ ($n\ge3$), 
 then the multiplicities in the decomposition of the tensor product 
$\pi_1 \otimes \pi_2$ are infinite almost everywhere with respect to
the measure $d\mu$ in the direct integral \eqref{eqn:3.1.1}.
\end{proposition}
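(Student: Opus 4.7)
The plan is to apply Mackey's double-coset theorem to the induced realization of each principal series. Write $\pi_i = \operatorname{Ind}_P^G(\chi_i)$ with $P = MAN$ a minimal parabolic subgroup of $G = SL(n,\mathbb{C})$ and $\chi_i$ a unitary character of $P$. Then $\pi_1 \boxtimes \pi_2 \simeq \operatorname{Ind}_{P \times P}^{G \times G}(\chi_1 \boxtimes \chi_2)$, and analyzing its restriction to $\operatorname{diag}(G)$ reduces to classifying the $G$-orbits on $G/P \times G/P$ under diagonal left translation. By the Bruhat decomposition these orbits are indexed by the Weyl group $W \simeq S_n$; the open orbit has generic stabilizer $P \cap w_0 P w_0^{-1} = MA$, a Cartan subgroup of $G$. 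The lower-dimensional cells contribute a set of measure zero, so modulo a null correction one obtains the unitary equivalence
\[
(\pi_1 \otimes \pi_2)|_{\operatorname{diag}(G)} \simeq \operatorname{Ind}_{MA}^{G}(\chi_1 \cdot w_0^{\ast} \chi_2).
\]

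The second step is to read off multiplicities from this induced representation via Frobenius reciprocity: for a principal series $\pi_\xi$ in the generic support of the Plancherel measure, the multiplicity equals $\dim \operatorname{Hom}_{MA}(\pi_\xi|_{MA}, \chi_1 w_0^{\ast} \chi_2)$, that is, the dimension of the corresponding space of $MA$-covariant distribution vectors in $\pi_\xi$. Realizing $\pi_\xi$ on the open Bruhat cell $N \subset G/P$, one compares $\dim_{\mathbb{R}} N = n(n-1)$ with $\dim_{\mathbb{R}} MA = 2(n-1)$; for $n \geq 3$ the generic $MA$-orbits on $N$ have positive codimension $(n-1)(n-2)$, and the transverse moduli space is positive-dimensional. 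Integrating a suitable character twist against each $MA$-orbit produces a distribution in the target Hom space, and varying the orbit yields an infinite-dimensional family of linearly independent such distributions, forcing infinite multiplicity for almost every $\xi$.

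The main technical obstacle is making the Mackey reduction rigorous at the unitary (rather than purely smooth) level and converting the geometric orbit picture on $N$ into a bona fide dimension count for the space of distribution vectors, which demands control along the boundary of the open Bruhat cell and along the singular (non-free) $MA$-orbits in $N$. A clean alternative, should this direct analysis prove delicate, is to construct explicitly an infinite family of intertwining operators from $\pi_1 \otimes \pi_2$ to a generic $\pi_\xi$ indexed by the $(n-1)(n-2)$-dimensional moduli of $MA$-orbits on $N$, whose linear independence can be verified by testing against matrix coefficients localized near distinct orbits; the degeneration of this moduli space to a point when $n=2$ explains why both the argument and the conclusion fail in that rank-one case.
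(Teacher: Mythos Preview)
The paper does not give its own proof of this proposition; it is quoted as a result of Gelfand--Graev with a citation to \cite{xgg}, and nothing further is argued. So there is no in-paper proof to benchmark your attempt against.

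Your strategy is nonetheless the classical one. The Mackey reduction in your first paragraph---identifying the open Bruhat double coset in $P\backslash G/P$ and obtaining $(\pi_1\otimes\pi_2)|_{\operatorname{diag}G}\simeq\operatorname{Ind}_{MA}^{G}(\chi_1\cdot w_0^{\ast}\chi_2)$ up to a null set---is correct and is exactly the mechanism behind the Gelfand--Graev computation. The dimension count $\dim_{\mathbb R}N-\dim_{\mathbb R}MA=(n-1)(n-2)$ in your second paragraph is likewise the right invariant and is precisely what separates $n\ge 3$ from $n=2$.

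Where the proposal remains a sketch is the passage from ``positive-dimensional moduli of $MA$-orbits on $N$'' to ``infinite Plancherel multiplicity.'' The formula you write, $\operatorname{mult}(\pi_\xi)=\dim\operatorname{Hom}_{MA}(\pi_\xi|_{MA},\chi)$, is not a theorem in the unitary category for non-compact $MA$: na\"{\i}ve Frobenius reciprocity fails there, and the honest multiplicity must be extracted either from a disintegration of $\operatorname{Ind}_{MA}^{G}(\chi)$ (e.g.\ induction in stages through $P=MA\ltimes N$ and the Mackey analysis of $L^2(N)$ as a $P$-module) or from your ``clean alternative'' of exhibiting a continuous family of intertwiners and proving their linear independence. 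You correctly flag these as the outstanding technical obstacles; the proposal is a sound outline matching the original source, but the second half is not yet a proof.
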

We recall
 the underlying $({\mathfrak {g}},K)$-modules
 of unitary principal
series representations of a complex reductive Lie group are obtained  
 as a special case of Zuckerman derived
functor modules $A_{\mathfrak {q}}(\lambda)$.

Hence we get
\begin{observation}\label{ob:infc}
The multiplicities of the continuous spectrum in the branching law of
the restriction $\pi|_{G'}$ may be infinite even in the setting where
$\pi_K\simeq A_{\mathfrak{q}}(\lambda)$ and $(G,G')$ is
 a reductive symmetric pair.
\end{observation}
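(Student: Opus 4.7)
The plan is to derive Observation~\ref{ob:infc} directly from Proposition~\ref{prop:minf} by placing the Gelfand--Graev tensor product example into the framework of symmetric pairs and Zuckerman modules. The key point is that a tensor product decomposition on $\grave{}G$ is literally the branching law for the diagonal symmetric pair $(G,G') = (\grave{}G \times \grave{}G,\operatorname{diag}(\grave{}G))$ from Example~\ref{ex:symm}(1), so any pathology of tensor products is automatically a pathology of branching for a reductive symmetric pair.

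First I would take $\grave{}G := SL(n,\mathbb{C})$ with $n\ge 3$, set $G := \grave{}G\times \grave{}G$ and $G' := \operatorname{diag}(\grave{}G)$, so that $(G,G')$ is a reductive symmetric pair. Next I would choose two unitary principal series representations $\pi_1,\pi_2$ of $\grave{}G$ as in Proposition~\ref{prop:minf} and form $\pi := \pi_1\boxtimes\pi_2$, which is an irreducible unitary representation of $G$. The tautological identification of the outer tensor product restricted to the diagonal with the inner tensor product gives $\pi|_{G'}\simeq \pi_1\otimes\pi_2$ as unitary representations of $\grave{}G$, so Proposition~\ref{prop:minf} supplies the infinite multiplicity statement in the direct integral \eqref{eqn:3.1.1} for the restriction.

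It remains only to verify that $\pi$ falls in the class $A_{\mathfrak{q}}(\lambda)$. Using the fact, recalled in the paragraph preceding the observation, that unitary principal series of a complex reductive Lie group arise as Zuckerman derived functor modules, each $(\pi_i)_{K_i}$ is isomorphic to some $A_{\mathfrak{q}_i}(\lambda_i)$ for a Borel subalgebra $\mathfrak{q}_i$ of $\mathfrak{sl}(n,\mathbb{C})$. Then, taking $\mathfrak{q} := \mathfrak{q}_1\oplus\mathfrak{q}_2$ inside $\mathfrak{g} := \mathfrak{sl}(n,\mathbb{C})\oplus\mathfrak{sl}(n,\mathbb{C})$ and $\lambda := (\lambda_1,\lambda_2)$, the cohomological induction functor factors through the product, giving $\pi_K \simeq A_{\mathfrak{q}}(\lambda)$ as a $(\mathfrak{g},K)$-module with $K = K_1\times K_2$. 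This verifies the hypothesis $\pi_K\simeq A_{\mathfrak{q}}(\lambda)$ and completes the observation.

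There is no real obstacle here; the proof is a matter of assembling Proposition~\ref{prop:minf}, the group-case interpretation of tensor products as restrictions, and the identification of complex unitary principal series as Zuckerman derived functor modules. The only point requiring a brief justification is the factorization $A_{\mathfrak{q}_1}(\lambda_1)\boxtimes A_{\mathfrak{q}_2}(\lambda_2) \simeq A_{\mathfrak{q}_1\oplus\mathfrak{q}_2}(\lambda_1,\lambda_2)$, which is standard since cohomological parabolic induction commutes with direct products of pairs $(\mathfrak{g},K)$.
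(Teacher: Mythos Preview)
Your argument is correct and is exactly the derivation the paper intends: the observation is stated immediately after Proposition~\ref{prop:minf} with the connective ``Hence we get,'' using precisely the group-case symmetric pair $(SL(n,\mathbb{C})\times SL(n,\mathbb{C}),\operatorname{diag}(SL(n,\mathbb{C})))$ and the remark that unitary principal series of a complex reductive group are special cases of $A_{\mathfrak{q}}(\lambda)$. Your write-up simply makes explicit the identification $\pi|_{G'}\simeq\pi_1\otimes\pi_2$ and the product factorization $A_{\mathfrak{q}_1}(\lambda_1)\boxtimes A_{\mathfrak{q}_2}(\lambda_2)\simeq A_{\mathfrak{q}_1\oplus\mathfrak{q}_2}(\lambda_1,\lambda_2)$ that the paper leaves implicit.
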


Here is a more delicate example,  
 which yields a counterexample to 
 (false) \lq{Theorem}\rq
 \ref{thm:false}
 about the discrete spectrum.
\begin{proposition}
[$G_{\mathbb{C}} \downarrow G_{\mathbb{R}}$]
\label{prop:mfinf}
{\upshape{(see \cite{xkdecoaspm})}}
There exist an irreducible unitary principal series representation
 $\pi$ 
of\/ $G=SO(5,{\mathbb{C}})$
 and two irreducible unitary representations $\tau_1$ 
(a holomorphic discrete series representation) 
and $\tau_2$
(a non-holomorphic discrete series representation) 
 of the
 subgroup $G'=SO(3,2)$
such that
$$
0 < \dim \operatorname{Hom}_{G'}
 (\tau_1, \pi|_{G'}) < \infty
\quad\text{and}\quad 
\dim \operatorname{Hom}_{G'}
 (\tau_2, \pi|_{G'}) = \infty.
$$
Here, 
$\operatorname{Hom}_{G'}(\cdot, \cdot)$ denotes the space of
continuous $G'$-intertwining operators.
\end{proposition}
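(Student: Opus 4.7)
My plan is to realize $\pi$ concretely as a parabolically induced representation and then analyze its restriction to $G'$ by decomposing the flag variety of $G$ into $G'$-orbits, extracting the two discrete series from the closed and the open orbits respectively.

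First, I would write $\pi\simeq\operatorname{Ind}_{P}^{G}(\chi)$ for a suitable parabolic subgroup $P=MAN$ of $G=SO(5,\mathbb{C})$ and a unitary character $\chi$, realizing $\pi$ on an $L^{2}$-space of sections of a line bundle over the compact flag variety $G/P$; recall that the underlying $(\mathfrak{g},K)$-module of such a principal series is of the form $A_{\mathfrak{q}}(\lambda)$. I would then decompose $G/P$ into $G'$-orbits via a Matsuki-type analysis, obtaining a finite stratification with a closed orbit $\mathcal{O}_{0}$ (compact stabilizer) and an open orbit $\mathcal{O}_{\mathrm{op}}$ (non-compact stabilizer). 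A Mackey-type argument then separates contributions to $\pi|_{G'}$ according to their support, and intertwiners from a discrete series $\tau$ of $G'$ into $\pi|_{G'}$ factor, via Frobenius reciprocity on each stratum, through $G'$-equivariant data on the corresponding orbit.

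To produce $\tau_{1}$ with positive finite multiplicity, I would focus on the closed orbit $\mathcal{O}_{0}$, whose stabilizer is a real parabolic $P'$ of $G'=SO(3,2)$ of Hermitian type. The holomorphicity of $\tau_{1}$ pins down its minimal $K'$-type in the extremal direction; a direct computation of $P'$-equivariant maps from this $K'$-type into the fiber of the bundle then shows that the corresponding intertwiner space is both nonzero and finite-dimensional, matching the classical mild-multiplicity behavior for holomorphic discrete series. To force $\dim\operatorname{Hom}_{G'}(\tau_{2},\pi|_{G'})=\infty$, I would apply Kobayashi's criterion via associated varieties (the one appealed to in the Remark above): compute $\mathcal{V}(\pi_{K})\subset\mathfrak{g}^{*}$ and $\mathcal{V}(\tau_{2})\subset(\mathfrak{g}')^{*}$, and verify that the natural image of $\mathcal{V}(\tau_{2})$ is \emph{not} contained in the portion of the nilpotent variety compatible with the closed-orbit stratum. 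This failure would prevent $\tau_{2}$ from being isolated in the $G'$-spectrum and, by an argument parallel to Proposition~\ref{prop:minf}, would force infinitely many independent intertwiners, reflecting the fact that $\tau_{2}$ is supported (in the sense of associated varieties) on the open orbit $\mathcal{O}_{\mathrm{op}}$ with non-compact stabilizer.

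The main obstacle I anticipate is the sharp discrepancy between $\tau_{1}$ and $\tau_{2}$: both are discrete series of the same group, yet they must be shown to behave antipodally under the restriction. The technical heart of the argument is thus to convert the geometric dichotomy between the closed and open $G'$-orbits on $G/P$ (equivalently, compact versus non-compact stabilizer) into the analytic dichotomy between finite and infinite multiplicity, while simultaneously controlling the mixing of the discrete spectrum with the continuous spectrum of $\pi|_{G'}$.
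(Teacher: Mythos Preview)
The paper itself does not prove this proposition; it only cites \cite{xkdecoaspm}. Your Mackey/orbit-decomposition framework is the right geometric setup, but the step that is supposed to yield $\dim\operatorname{Hom}_{G'}(\tau_2,\pi|_{G'})=\infty$ has a genuine gap. The associated-variety criterion you invoke (Theorem~\ref{thm:5.2.1}, Corollary~\ref{cor:necdeco}) is a necessary condition for \emph{infinitesimal} discrete decomposability of the entire module $\pi_K|_{\mathfrak{g}'}$; its failure says nothing about whether a \emph{specific} $\tau_2$ admits infinitely many continuous $G'$-embeddings into $\pi$. Remark~\ref{rem:AV} in fact warns that the associated-variety inclusion need not persist once $\operatorname{Hom}_{\mathfrak{g}'}$ is replaced by $\operatorname{Hom}_{G'}$, so the criterion lives in the wrong category for your purpose. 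Likewise, ``an argument parallel to Proposition~\ref{prop:minf}'' is not a proof: that result concerns multiplicities in the \emph{continuous} spectrum of a tensor product and supplies no mechanism for infinitely many discrete embeddings of a fixed $\tau_2$.

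The missing ingredient is to stay with the Frobenius-reciprocity step you set up and then abandoned. On the open $G'$-orbit the isotropy group $H'$ is non-compact, and the point is to choose a non-holomorphic discrete series $\tau_2$ for which the relevant $H'$-isotypic space in $\tau_2|_{H'}$ is infinite-dimensional; reciprocity on that stratum then yields infinitely many independent intertwiners into $\operatorname{Ind}_{H'}^{G'}(\chi|_{H'})\hookrightarrow\pi|_{G'}$. The contrast with $\tau_1$ is not merely ``closed versus open orbit'': it is that a holomorphic discrete series, being a highest-weight module, remains admissible upon restriction to the relevant isotropy subgroups, which is what caps its multiplicity. Finally, for $\tau_1$ you still owe a non-vanishing argument (the strict inequality $0<\dim\operatorname{Hom}_{G'}(\tau_1,\pi|_{G'})$), not only a finiteness bound.
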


\section{Almost irreducible branching laws}
\label{sec:aibl}

Let $G$ be a real reductive Lie group,
 $G'$ a subgroup, 
 and $\pi$ an irreducible unitary representation of $G$.  

We have seen some wild aspects of branching laws
 in the previous section.    
As its opposite extremal case, 
this section highlights especially nice cases,
namely, 
where the restriction $\pi|_{G'}$ remains irreducible
 or almost irreducible
in the following (obvious) sense:
\begin{definition}\label{def:ai}
We say a unitary representation $\pi$ is \emph{almost irreducible}
if $\pi$ is a finite direct sum of irreducible representations.
\end{definition}

It may well happen that the restriction $\pi|_{G'}$ is almost
irreducible when $G'$ is a maximal parabolic subgroup of $G$,
but is a rare phenomenon
 when $G'$ is a reductive subgroup.  
Nevertheless, 
we find in Subsections \ref{subsec:3.2}--\ref{subsec:3.4} that
there exist a small number of examples where the restriction
$\pi|_{G'}$ stays irreducible,
 or is almost irreducible
 in some cases.

We divide such irreducible unitary representations $\pi$ of $G$ into three cases,
according as $\pi_K$ are Zuckerman derived functor modules
 $A_{\mathfrak {q}}(\lambda)$
(see Theorem \ref{thm:Aqai}),
 principal series representations
(see Theorem \ref{thm:aips}),
and minimal representations 
(see Theorem \ref{thm:ain}).
{}From the view point
 of the Kostant--Kirillov--Duflo orbit method, 
 they
 may be thought of as the geometric quantization
 of elliptic,
 hyperbolic,
 and nilpotent orbits, 
 respectively.  

\subsection{Restriction to compact subgroups}

First of all,
we observe that almost irreducible restrictions $\pi|_{G'}$
 happen only when $G'$ is non-compact
 if $\dim \pi = \infty$.

Let $K$ be a maximal compact subgroup of a real reductive Lie group
$G$. 

\begin{observation}\label{prop:GK}
 For any irreducible
infinite dimensional unitary representation $\pi$ of $G$,
the branching law
  of the restriction $\pi|_K$ contains
  infinitely many irreducible representations
of $K$.
\end{observation}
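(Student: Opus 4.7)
The plan is to deduce this as an immediate consequence of Harish-Chandra's admissibility theorem, which asserts that for any irreducible unitary representation $\pi$ of a real reductive Lie group $G$, the restriction $\pi|_K$ is $K$-admissible. Concretely, this means $\pi|_K$ decomposes as a \emph{discrete} Hilbert direct sum
\[
\pi|_K \simeq \sum_{\tau \in \widehat{K}}{}^{\oplus} m_\pi(\tau)\,\tau,
\]
with every multiplicity $m_\pi(\tau)$ finite. (Note that although the general decomposition in Fact \ref{fact:irreddeco} is a direct integral, the direct sum form here is automatic since $K$ is compact and $\pi$ is $K$-admissible.)

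Given this, the argument is a one-line dimension count. First I would note that since $K$ is compact, every $\tau \in \widehat{K}$ is finite dimensional. Then, if the set $\{\tau \in \widehat{K} : m_\pi(\tau) \neq 0\}$ were finite, one would have
\[
\dim \pi \;=\; \sum_{\tau \in \widehat{K}} m_\pi(\tau)\,\dim \tau \;<\; \infty,
\]
contradicting the assumption that $\pi$ is infinite dimensional. Hence infinitely many $K$-types must occur in $\pi|_K$.

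There is essentially no obstacle here once Harish-Chandra admissibility is invoked; the only point that deserves emphasis is that admissibility upgrades the abstract direct integral decomposition of Fact \ref{fact:irreddeco} to an honest discrete sum over $\widehat{K}$ with finite multiplicities, which is precisely what makes the naive dimension count legitimate. This motivates the subsequent sections, where the interesting question becomes whether an analogous admissibility property can hold for a \emph{noncompact} reductive subgroup $G'$, so that $\pi|_{G'}$ might decompose discretely and even (almost) irreducibly.
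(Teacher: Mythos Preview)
Your proposal is correct and follows exactly the same approach as the paper: the paper's proof is the single line ``Clear from Harish-Chandra's admissibility theorem (see Fact~\ref{fact:HCadm} below),'' and you have simply spelled out the implicit dimension count that makes this immediate.
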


\begin{proof} 
Clear from Harish-Chandra's admissibility
theorem (see Fact~\ref{fact:HCadm} below).
\end{proof}

For later purpose,
 we introduce the following terminology: 
\begin{definition}
\label{def:cptadm}
Suppose $K'$ is a compact group 
 and $\pi$ is a representation of $K'$.  
We say $\pi$ is $K'$-admissible 
 if 
$
     \dim \operatorname{Hom}_{K'}(\tau, \pi) < \infty
    \quad\text{for any }
    \tau \in \widehat{K'}.  
$
\end{definition}
With this terminology,
 we state:
\begin{fact}[Harish-Chandra's admissibility theorem]
\label{fact:HCadm}
Any irreducible unitary representation $\pi$ of $G$
 is $K$-admissible.  
\end{fact}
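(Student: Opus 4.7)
The plan is to reduce $K$-admissibility to a finite-dimensionality statement about a commutant, and then to invoke a structural property specific to real reductive groups.

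First, for $\tau \in \widehat{K}$, let $e_\tau := (\dim \tau)\,\overline{\chi_\tau} \in L^1(K)$ be the central idempotent, and let $E_\tau := \pi(e_\tau)$ be the resulting orthogonal projection of $H_\pi$ onto the $\tau$-isotypic subspace $H_\pi(\tau)$. As a $K$-module, $H_\pi(\tau) \cong M_\tau \otimes V_\tau$, where $V_\tau$ realizes $\tau$ and $M_\tau$ is the multiplicity space with $\dim M_\tau = \dim \operatorname{Hom}_K(\tau,\pi)$. So $K$-admissibility of $\pi$ is equivalent to the assertion that $\operatorname{End}_K(H_\pi(\tau)) \cong \operatorname{End}(M_\tau)$ is finite-dimensional for every $\tau \in \widehat{K}$.

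Next, I would exploit the irreducibility of $\pi$ via a commutant argument. By Schur's lemma, $\pi(G)'' = B(H_\pi)$; compressing by $E_\tau$ then gives that the weak closure of $\pi(e_\tau * C_c^\infty(G) * e_\tau)$ equals $B(H_\pi(\tau))$. Its intersection with the commutant of $\pi(K)|_{H_\pi(\tau)}$ is precisely $\operatorname{End}_K(H_\pi(\tau)) = \operatorname{End}(M_\tau) \otimes \operatorname{id}_{V_\tau}$. In this way the multiplicity problem is converted into a question about the size of the commutant of the $K$-action inside the factor $B(H_\pi(\tau))$.

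The main obstacle, and the step that genuinely uses reductivity, is to prove that this commutant is finite-dimensional; no such bound holds for general locally compact groups. The cleanest route I would follow is Harish-Chandra's subquotient theorem: every irreducible unitary representation $\pi$ of $G$ is infinitesimally equivalent to a subquotient of a minimal principal series $\operatorname{Ind}_P^G(\sigma \otimes e^\nu \otimes 1)$. For such an induced representation, Frobenius reciprocity gives
\[
[\operatorname{Ind}_P^G(\sigma \otimes e^\nu \otimes 1)|_K : \tau] \;=\; [\sigma : \tau|_M]_M,
\]
which is finite because $M \subset K$ is compact and $\tau|_M$ is finite-dimensional. Passage to a subquotient can only decrease the multiplicities, so $m_\pi(\tau) < \infty$. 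An alternative route, which bypasses the subquotient theorem, is to invoke Harish-Chandra's theorem that real reductive Lie groups are of type~I in the sense of von Neumann algebras; the type-I structure of the factor generated by $\pi(G)$ together with the compression argument above forces $\operatorname{End}_K(H_\pi(\tau))$ to be finite-dimensional. Either approach is essentially reductive-specific, and identifying the exact structural input is where the real work lies.
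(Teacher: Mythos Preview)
The paper does not prove this statement: it is recorded as a classical \emph{Fact} attributed to Harish-Chandra and invoked without argument. So there is no ``paper's own proof'' to compare against; your proposal stands on its own.

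Your main route---via the subquotient theorem and Frobenius reciprocity for $K$-types of principal series---is the standard argument and is essentially correct. Two remarks. First, the opening commutant reformulation is logically fine but not actually used in the resolution: once you invoke the subquotient theorem you bound $m_\pi(\tau)$ directly, without passing through $\operatorname{End}_K(H_\pi(\tau))$. You could drop that setup without loss. Second, and more substantively, the subquotient theorem as usually stated applies to irreducible $(\mathfrak{g},K)$-modules, so you are implicitly using that the $K$-finite vectors of an irreducible unitary representation form a $\mathfrak{g}$-stable dense subspace carrying an infinitesimal character. That step (analyticity of $K$-finite vectors, Schur for $Z(\mathfrak{g})$) is itself nontrivial Harish-Chandra input and deserves at least a sentence.

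Your ``alternative route'' via the type~I property is shakier: the standard proofs that real reductive groups are CCR (hence type~I) rely on admissibility or on the very finiteness of $K$-multiplicities you are trying to establish, so there is a genuine risk of circularity. I would either drop that alternative or cite a specific type~I proof that is demonstrably independent of admissibility.
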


We shall apply the notion of $K'$-admissibility
 when $K'$ is a subgroup of $K$,
 and see that it plays a crucial role
 in the theory of discretely decomposable restrictions 
in Section \ref{sec:deco}.

\subsection{Irreducible restriction $\pi|_{G'}$ with
$\pi_K=A_{\mathfrak{q}}(\lambda)$}
\label{subsec:3.2}

This subsection discusses for which triple $(G,G',\pi)$
 the restriction $\pi|_{G'}$ is (almost)
irreducible in the setting that the underlying
 $(\mathfrak{g},K)$-module $\pi_K$ is
 isomorphic to a
 Zuckerman derived functor module
$A_{\mathfrak{q}}(\lambda)$. 

Let $\mathfrak{q}$ be a $\theta$-stable
parabolic subalgebra of
$\mathfrak{g}_{\mathbb{C}}=\mathfrak{g}\otimes_{\mathbb{R}}\mathbb{C}$,
$L := N_G(\mathfrak{q}) \equiv 
\{g\in G: \operatorname{Ad}(g)\mathfrak{q} = \mathfrak{q}\}$,
and $\overline{A_{\mathfrak {q}}(\lambda)}$
 the unitary representation of $G$ whose
underlying $(\mathfrak{g},K)$-module is $A_{\mathfrak{q}}(\lambda)$.

\begin{theorem}
[\cite{xkvoc}]
\label{thm:Aqai}
Suppose that $(G,G',L)$ is one of the following triples:
\[
\begin{matrix}
  G &&G' &L
  \\
  \hline
 SU(n,n)      &&Sp(n,\mathbb{R}) &U(n-1,n)
  \\
 SU(2p,2q)    &&Sp(p,q)          &U(2p-1,2q)
  \\
 SO_0(2p,2q)  &&SO_0(2p,2q-1)    &U(p,q)
  \\
 SO_0(4,3)    &&G_2(\mathbb{R})  &SO_0(4,1) \times SO(2)
  \\
 SO_0(4,3)    &&G_2(\mathbb{R})  &SO(2) \times SO_0(2,3)
\\
\hline
SL(2n,\mathbb{C})  && Sp(n,\mathbb{C})  & GL(2n-1,\mathbb{C})
\\
SO(2n,\mathbb{C})  && SO(2n-1,\mathbb{C})  & GL(n,\mathbb{C})
\\
SO(7,\mathbb{C})  && G_2(\mathbb{C})  & \mathbb{C}^\times \times SO(5,\mathbb{C})
\\
\hline
SU(2n)  && Sp(n)  & U(2n-1)
\\
SO(2n)  && SO(2n-1)  & U(n)
\\
SO(7)  && G_{2, compact}  & SO(2) \times SO(5)
\end{matrix}
\]
Then, the restriction
$\overline{A_{\mathfrak {q}}(\lambda)}|_{G'}$
 is almost irreducible for
any $\lambda$ 
satisfying the positivity and integrality condition
(see Subsection \ref{sub:7.1.3}).
Further, 
the restriction $\overline{A_{\mathfrak {q}}(\lambda)}|_{G'}$
 stays irreducible
 if the character $\lambda|_{\mathfrak {l}'}$ 
 of ${\mathfrak {l}}' := {\mathfrak {g}}' \cap {\mathfrak {l}}$
 is
 in the good range with respect to 
$\mathfrak{q}':=\mathfrak{g}'_{\mathbb{C}}\cap\mathfrak{q}$
 (see \eqref{eqn:good}).
On the level of Harish-Chandra modules,
we have an isomorphism
\[
A_{\mathfrak{q}}(\lambda)
\simeq
A_{\mathfrak{q}'}(\lambda|_{\mathfrak{l}'}),
\]
as $(\mathfrak{g}',K')$-modules.  
\end{theorem}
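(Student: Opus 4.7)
The plan is to reduce the statement to a uniform mechanism -- discrete decomposability plus a functorial embedding of cohomologically induced modules -- and then handle the 11 triples by a single case-by-case verification of one numerical input, namely $K'$-admissibility. First, for each triple I would check the parabolic compatibility: since $\mathfrak{q}$ is $\theta$-stable and $G'$ is stable under $\theta$ in all listed cases, the intersection $\mathfrak{q}' := \mathfrak{g}'_{\mathbb{C}} \cap \mathfrak{q}$ is a $\theta$-stable parabolic subalgebra of $\mathfrak{g}'_{\mathbb{C}}$ with Levi decomposition $\mathfrak{q}' = \mathfrak{l}' + \mathfrak{u}'$, where $\mathfrak{l}' = \mathfrak{g}'_{\mathbb{C}} \cap \mathfrak{l}$ and $\mathfrak{u}' = \mathfrak{g}'_{\mathbb{C}} \cap \mathfrak{u}$. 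This is a root-system check and produces a well-defined Harish-Chandra module $A_{\mathfrak{q}'}(\lambda|_{\mathfrak{l}'})$ whenever $\lambda|_{\mathfrak{l}'}$ satisfies the appropriate positivity.

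Second, I would establish that the restriction $\overline{A_{\mathfrak{q}}(\lambda)}|_{G'}$ is infinitesimally discretely decomposable. The tool is the $K'$-admissibility criterion alluded to in the excerpt as Theorem \ref{thm:sufadm}: compute the asymptotic $K$-support of $A_{\mathfrak{q}}(\lambda)$, which by the Blattner formula is contained in the cone $\mathbb{R}_{\geq 0}\text{-span of }\Delta(\mathfrak{u} \cap \mathfrak{p})$ translated by the $L \cap K$-weights, and verify that this cone intersects $(\mathfrak{t}')^{\perp} \subset \mathfrak{t}^*$ only at the origin. For each triple this is a finite combinatorial verification, and the specific pairs listed in the table are precisely those for which this transversality holds. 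Once $K'$-admissibility is in hand, Kobayashi's general theory yields a discrete (in fact finite, by the compactness of the relevant $K$-type cone modulo $K'$) decomposition of $\overline{A_{\mathfrak{q}}(\lambda)}|_{G'}$ into irreducible $(\mathfrak{g}', K')$-modules.

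Third, I would construct an explicit nonzero $(\mathfrak{g}', K')$-intertwiner $A_{\mathfrak{q}'}(\lambda|_{\mathfrak{l}'}) \to A_{\mathfrak{q}}(\lambda)$. The source arises from the Zuckerman functor applied on $\mathfrak{g}'_{\mathbb{C}}$ to the generalized Verma module $\mathcal{U}(\mathfrak{g}'_{\mathbb{C}}) \otimes_{\mathcal{U}(\mathfrak{q}')} \mathbb{C}_{\lambda|_{\mathfrak{l}'}}$, which embeds naturally into $\mathcal{U}(\mathfrak{g}_{\mathbb{C}}) \otimes_{\mathcal{U}(\mathfrak{q})} \mathbb{C}_{\lambda}$; the compatibility of the cohomological induction functors, combined with the good-range hypothesis (\ref{eqn:good}) ensuring that the relevant higher $\Gamma$-functors vanish and that the module is irreducible and nonzero, upgrades this to a map of derived-functor modules. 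Matching infinitesimal characters of both sides (via the Harish-Chandra isomorphism transported along $\mathfrak{l}' \hookrightarrow \mathfrak{l}$) forces this map to be an embedding onto a full summand.

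Finally, I would verify irreducibility/almost-irreducibility by comparing $K'$-types. The Blattner formula applied to $A_{\mathfrak{q}'}(\lambda|_{\mathfrak{l}'})$ gives its $K'$-types as $K'$-induced from $L' \cap K'$-types along $\mathfrak{u}' \cap \mathfrak{p}'$; the $K'$-types of $A_{\mathfrak{q}}(\lambda)$ as a $K'$-module come from first computing $K$-types by Blattner and then applying classical branching $K \downarrow K'$. For each of the pairs $(K, K')$ in the table -- $U(2n) \downarrow Sp(n)$, $SO(2n) \downarrow SO(2n-1)$, $SO(7) \downarrow G_2$ and their complex/noncompact analogues -- these branchings are multiplicity-free with explicit PRV-type rules, and the two counts agree. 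The main obstacle will be precisely this final matching: absent the good-range condition the source module may become reducible or zero, so one recovers only almost irreducibility, and ruling out extra composition factors in the non-good range (where \emph{almost} irreducible rather than irreducible is claimed) requires delicate control over the failure of vanishing for the relevant cohomological induction functors. A secondary technical difficulty is the exceptional triple involving $G_2$, where the classical branching calculus must be replaced by explicit character computations for $SO(7) \downarrow G_2$.
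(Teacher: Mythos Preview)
Your proposal takes a genuinely different route from the paper, and the contrast is instructive because the paper's argument is both shorter and more conceptual.

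The paper's proof is entirely geometric and rests on a single observation: for each listed triple, the subgroup $G'$ already acts \emph{transitively} on the elliptic orbit $G/L$, so the inclusion $G'/L' \hookrightarrow G/L$ (with $L' := G' \cap L$) is in fact a diffeomorphism. This transitivity is read off from the classical sphere identifications
\[
Sp(n)/Sp(n-1)\simeq U(2n)/U(2n-1),\qquad U(n)/U(n-1)\simeq SO(2n)/SO(2n-1),\qquad Spin(7)/G_2\simeq Spin(5)/Spin(3),
\]
together with their various real and complex forms. Since moreover $\mathfrak{g}'_{\mathbb{C}}+\mathfrak{q}=\mathfrak{g}_{\mathbb{C}}$, the diffeomorphism $G'/L'\overset{\sim}{\to}G/L$ is biholomorphic; pulling back the line bundle $\mathcal{L}_{\lambda+2\rho(\mathfrak{u})}$ yields $\mathcal{L}_{\lambda|_{\mathfrak{l}'}+2\rho(\mathfrak{u}')}$ over $G'/L'$ (using $\rho(\mathfrak{u})|_{\mathfrak{l}'}=\rho(\mathfrak{u}')$, which drops out of the isomorphism of canonical bundles), and the Dolbeault cohomologies --- hence, via Fact~\ref{fact:ZDF}, the modules $A_{\mathfrak{q}}(\lambda)$ and $A_{\mathfrak{q}'}(\lambda|_{\mathfrak{l}'})$ --- coincide tautologically. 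No $K'$-admissibility, no intertwiner construction, and no $K'$-type bookkeeping is required; the isomorphism is literally an equality of cohomology on the same complex manifold. This also explains \emph{why} exactly these eleven triples appear: they are the cases where $G'$ acts transitively on $G/L$ (cf.\ Example~\ref{ex:Aq}~3)).

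Your scheme, by contrast, proceeds through discrete decomposability, an intertwiner, and a Blattner/branching comparison. Steps~2 and~4 can be carried out with labour, but Step~3 as written has a gap: there is no off-the-shelf ``compatibility of cohomological induction functors'' that converts an embedding of generalized Verma modules $U(\mathfrak{g}'_{\mathbb{C}})\otimes_{U(\mathfrak{q}')}\mathbb{C}_{\lambda'}\hookrightarrow U(\mathfrak{g}_{\mathbb{C}})\otimes_{U(\mathfrak{q})}\mathbb{C}_{\lambda}$ into a map $A_{\mathfrak{q}'}(\lambda')\to A_{\mathfrak{q}}(\lambda)$, because the two Zuckerman functors are taken relative to different pairs $(\mathfrak{g}',K')$ and $(\mathfrak{g},K)$ and in possibly different cohomological degrees $S'$ and $S$. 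What one needs to relate them is precisely an identification $K'/(L'\cap K')\simeq K/(L\cap K)$ forcing $S'=S$, i.e.\ the transitivity statement that is the entire content of the paper's proof. Your route can likely be completed, but only after importing that geometric lemma; once it is in hand, the remaining machinery becomes redundant.
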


\begin{proof}[Outline of proof]
We recall the following well-known representations of spheres:
\begin{alignat*}{2}
Sp(n)/Sp(n-1)
& \overset{\sim}{\to}
  U(2n)/U(2n-1) 
&& \simeq
  S^{4n-1},
\\
  U(n)/U(n-1)
& \overset{\sim}{\to}
SO(2n)/SO(2n-1)
&& \simeq
  S^{2n-1},
\\
Spin(5)/Spin(3)
& \overset{\sim}{\to}
  Spin(7)/G_2
&& \simeq
  S^7.  
\end{alignat*}
Then the trick in \cite[Lemma 5.1]{xkdecomp} shows
 that the natural inclusion map
$G'/L' \hookrightarrow G/L$ is in fact surjective for any of
 the specific triples $(G,G',L)$ in Theorem \ref{thm:Aqai},
where we set $L' := G' \cap L$.
Further, 
we have 
$\mathfrak{g}'_{\mathbb{C}}+\mathfrak{q}=\mathfrak{g}_{\mathbb{C}}$
so that 
the inclusion 
$
     {\mathfrak {g}}_{\mathbb{C}}' 
     \hookrightarrow {\mathfrak {g}}_{\mathbb{C}}
$
induces the bijection
$\mathfrak{g}'_{\mathbb{C}}/\mathfrak{q}'\overset{\sim}{\to}
 \mathfrak{g}_{\mathbb{C}}/\mathfrak{q}
$
and $L'$ coincides with
$N_{G'}(\mathfrak{q}')$.
Thus,
 the diffeomorphism $G'/L'\overset{\sim}{\to}G/L$ is biholomorphic.  
In turn,
 we get an isomorphism of canonical line bundles 
(see \eqref{eqn:OmegaGL}):
\[
\begin{matrix}
    G'\times_{L'}\mathbb{C}_{2\rho(\mathfrak{u}')}
   &\overset{\sim}{\to}
   &G\times_{L}\mathbb{C}_{2\rho(\mathfrak{u})}
\\
   \downarrow && \downarrow
\\
    G'/L'
   &\overset{\sim}{\to}
   &G/L
\end{matrix}
\]
This implies
\[
\rho(\mathfrak{u})|_{\mathfrak{l}'} = \rho(\mathfrak{u}')
\]
in the setting of Theorem \ref{thm:Aqai}.
Let ${\mathcal {L}}_{\lambda}$ be a $G$-equivariant 
 holomorphic line bundle over $G/L$
 for $\lambda \in \sqrt{-1} {\mathfrak {l}}^{\ast}$.  
Then the pull-back of 
 $\mathcal{L}_{\lambda + 2\rho({\mathfrak {u}})}$
 to $G'/L'$ yields a $G'$-equivariant holomorphic 
line bundle
$\mathcal{L}_{\lambda|_{\mathfrak {l}'} + 2\rho({\mathfrak {u}'})}$
over $G'/L'$.
Hence,
we have natural isomorphisms
\[
H_{\bar{\partial}}^* (G/L,\mathcal{L}_{\lambda + 2\rho({\mathfrak {u}})})
\overset{\sim}{\to}
H_{\bar{\partial}}^* (G'/L',\mathcal{L}_
{\lambda|_{\mathfrak {l}'} +2\rho({\mathfrak {u}'})}
)
\]
between Dolbeault cohomology groups.  
Thus, we get Theorem \ref{thm:Aqai} in view of the geometric interpretation of
Zuckerman derived functor modules
(see Section \ref{sec:appendix}).
\end{proof}

\begin{remark}\label{rem:Aqai}
1)\enspace
The pairs $(G,G')$  in Theorem
\ref{thm:Aqai} are reductive symmetric pairs except for the case
$(G,G') = (SO_0(4,3),G_2(\mathbb{R}))$.

2)\enspace
The pair $(\mathfrak{g},\mathfrak{l})$ is a reductive
symmetric pair in all the cases of Theorem \ref{thm:Aqai}
(${\mathfrak {q}}$ is of symmetric type 
in the sense of Definition \ref{def:qsymm}). 
Correspondingly there are two choices of $\theta$-stable parabolic
subalgebras $\mathfrak{q}$ of $\mathfrak{g}_{\mathbb{C}}$ with
$N_G(\mathfrak{q})\simeq L$.
In either case,
$\overline{A_{\mathfrak q}(\lambda)}|_{G'}$ is almost irreducible.

3)\enspace
In the compact case (i.e. the last three rows),
 the restriction 
 is irreducible for all $\lambda$.  
\end{remark}

\begin{example}\label{ex:Aq}
1)\enspace
In \cite{xkdecomp} we gave a different proof of
Theorem~\ref{thm:Aqai}
for the pair
$SO_0(4,3)\downarrow G_2(\mathbb{R})$
 based on the Beilinson--Bernstein localization theory,
 and then applied it to 
 construct (all) discrete series representations for
non-symmetric homogeneous spaces
$G_2(\mathbb{R})/SL(3,\mathbb{R})$
and
$G_2(\mathbb{R})/SU(2,1)$.

2)\enspace
H. Sekiguchi applied the restriction of $A_{\mathfrak{q}}(\lambda)$ 
 with respect to the symmetric pair
$U(n,n) \downarrow Sp(n,\mathbb{R})$
for more general ${\mathfrak {q}}$
to get a range characterization theorem of the Penrose transform (see \cite{spen}).
Following the notation in \cite[Proposition 1.5]{spen},
 we see that
the unitary character ${\mathbb{C}}_{\lambda}$ is 
 in the weakly fair range
  for the $\theta$-stable maximal parabolic subalgebra ${\mathfrak {q}}$
 considered in Theorem \ref{thm:Aqai}
if and only if $\lambda=\lambda_1e_1$ with $\lambda_1\ge-n$.
Further, 
 $A_{\mathfrak{q}}(\lambda)$ 
is irreducible as a ${\mathfrak {u}}(n,n)$-module
 for all $\lambda_1 \ge -n$.  
Its restriction to ${\mathfrak {sp}}(n,{\mathbb{R}})$
 stays irreducible for $\lambda_1>-n$,
 but splits into two irreducible modules
$(W(n,1)_+)_K\oplus(W(n,1)_-)_K$.

3)\enspace
Dunne and Zierau \cite{xduzi} determined the automorphism groups
  of elliptic orbits.
It follows from their results that
 our list in Theorem \ref{thm:Aqai} exhausts all
 the cases where $\overline{A_{\mathfrak q}(\lambda)}|_{G'}$
 stays irreducible for sufficiently positive $\lambda$.  
\end{example}

\subsection{Irreducible restriction $\pi|_{G'}$ with
$\pi=\mathrm{Ind}_P^G(\tau)$}
\label{subsec:3.4}

This subsection discusses for which triples $(G,G',\pi)$
 the restriction $\pi|_{G'}$ is 
 (almost) irreducible in the setting 
 that $\pi$ is a (degenerate) principal series
representation $\pi=\operatorname{Ind}_P^G(\tau)$ of $G$.

Let $P$ be a parabolic subgroup of $G$ with Levi decomposition
$P=LN$. 
For an irreducible unitary representation $\tau$ of $L$,
we extend it to $P$ by letting $N$ act trivially,
and denote by $\operatorname{Ind}_P^G(\tau)$ the unitarily induced
representation of $G$.

\begin{theorem}\label{thm:aips}
Suppose that $(G,G',L)$ is one of the following triples:
\[
\begin{array}{ccc}
G & G' & L
\\
\hline
SL(2n,\mathbb{C})  & Sp(n,\mathbb{C})  & GL(2n-1,\mathbb{C})
\\
SO(2n,\mathbb{C})  & SO(2n-1,\mathbb{C})  & GL(n,\mathbb{C})
\\
SO(7,\mathbb{C})  & G_2(\mathbb{C})  & \mathbb{C}^\times \times SO(5,\mathbb{C})
\\
\hline
SL(2n,\mathbb{R})  &Sp(n,\mathbb{R})  & GL(2n-1,\mathbb{R})
\\
SO(n,n)  & SO(n,n-1)  &  GL(n,\mathbb{R})
\\
SO(4,3)  & G_2(\mathbb{R})  & SO(1,1) \times SO(3,2)
\end{array}
\]
Then,
the degenerate unitary principal series representations
$\pi = \operatorname{Ind}_P^G(\tau)$
of $G$ are almost irreducible when restricted to the subgroup $G'$ for
any one dimensional unitary representation $\tau$
 of any parabolic subgroup $P$ 
 having $L$ as its Levi part.
\end{theorem}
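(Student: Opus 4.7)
The strategy parallels that of Theorem~\ref{thm:Aqai}, replacing the elliptic picture (Dolbeault cohomology over $G/L$) by the hyperbolic picture (induced representations over $G/P$). The key geometric input is the same in spirit: rather than proving the inclusion $G'/L' \hookrightarrow G/L$ of elliptic orbits is a diffeomorphism, we prove the inclusion $G'/P' \hookrightarrow G/P$ of real generalized flag varieties is a diffeomorphism, where $P' := G' \cap P$.

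First, I would check case by case that $G'$ acts transitively on $G/P$ for each of the six triples. In the complex rows, this reduces to classical transitive actions: $Sp(n,\mathbb{C})$ acts transitively on $\mathbb{P}^{2n-1}\mathbb{C} = SL(2n,\mathbb{C})/P$; $SO(2n-1,\mathbb{C})$ acts transitively on the projective isotropic quadric $SO(2n,\mathbb{C})/P$; and $G_2(\mathbb{C}) \subset SO(7,\mathbb{C})$ acts transitively on the five-dimensional projective quadric via its action on the imaginary split octonions. The real rows are then obtained by restricting to the appropriate real forms and verifying transitivity on the corresponding real projective or isotropic Grassmannians, using in the $G_2(\mathbb{R})$ case the known transitive action on the cone of isotropic lines in the split octonions.

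Once transitivity is established, $P' = G' \cap P$ is automatically a parabolic subgroup of $G'$ whose Levi component is $L' = G' \cap L$, agreeing with the Levi in the table. The $G$-equivariant line bundle $G \times_P \mathbb{C}_\tau$ over $G/P$ pulls back under the identification $G'/P' \overset{\sim}{\to} G/P$ to the $G'$-equivariant line bundle $G' \times_{P'} \mathbb{C}_{\tau|_{P'}}$, and taking $L^2$-sections yields the unitary equivalence
\[
\operatorname{Ind}_P^G(\tau)\big|_{G'} \;\simeq\; \operatorname{Ind}_{P'}^{G'}\bigl(\tau|_{P'}\bigr).
\]
Thus the restriction problem collapses to a reducibility problem intrinsic to the smaller group~$G'$.

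The remaining step is to show that $\operatorname{Ind}_{P'}^{G'}(\tau|_{P'})$ is almost irreducible for every unitary character $\tau$. For generic $\tau$ this follows from the standard Bruhat--Harish-Chandra irreducibility theorem for unitary principal series. The main obstacle will be handling the reducibility locus: one must verify that whenever $\tau|_{P'}$ falls on a wall, the induced module still decomposes into only \emph{finitely} many irreducibles. This is the delicate point, since the representations in question are degenerate principal series attached to small (in particular, maximal) parabolics $P'$ of $Sp(n,\mathbb{C})$, $SO(2n-1,\mathbb{C})$, $G_2(\mathbb{C})$, $Sp(n,\mathbb{R})$, $SO(n,n-1)$ and $G_2(\mathbb{R})$. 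For the classical groups, the composition series of such degenerate series has been computed (via Howe correspondence or generalized Verma module techniques) and has finite length. For the $G_2$ cases I would invoke the known classification of composition factors of the degenerate principal series attached to the $SO(1,1) \times SO(3,2)$-parabolic (respectively its complex analogue) to conclude finiteness of the constituent set.
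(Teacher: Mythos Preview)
Your approach is essentially identical to the paper's: establish that $G'$ acts transitively on $G/P$, so that $P':=G'\cap P$ is parabolic in $G'$ and $\operatorname{Ind}_P^G(\tau)|_{G'}\simeq\operatorname{Ind}_{P'}^{G'}(\tau|_{P'})$. The paper's outline stops there, with ``hence the conclusion follows.''

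Your final paragraph, however, treats as a ``main obstacle'' something that is in fact automatic and requires no case analysis. For any real reductive group $G'$, any parabolic subgroup $P'$, and any finite-dimensional representation $\sigma$ of $P'$, the induced module $\operatorname{Ind}_{P'}^{G'}(\sigma)$ has finite length as a $(\mathfrak{g}',K')$-module; this is a standard consequence of Harish-Chandra's theory (e.g.\ via the subrepresentation theorem and finiteness of composition factors with a given infinitesimal character). Since here $\tau|_{P'}$ is a unitary character, $\operatorname{Ind}_{P'}^{G'}(\tau|_{P'})$ is moreover a unitary representation, and a unitary representation of finite length is automatically a finite direct sum of irreducibles---that is, almost irreducible in the sense of Definition~\ref{def:ai}. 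So you do not need Bruhat--Harish-Chandra irreducibility, Howe correspondence, or any explicit composition series for the degenerate principal series of $Sp$, $SO$, or $G_2$; the conclusion is immediate once the geometric isomorphism $G'/P'\overset{\sim}{\to}G/P$ is in hand.
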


\begin{proof}[Outline of the proof]
The subgroup $G'$ acts transitively on the (real) flag variety $G/P$
in the setting of Theorem \ref{thm:aips},
and the isotropy subgroup $P':=G'\cap P$ becomes a parabolic subgroup
of $G'$.
Then we get an isomorphism $G'/P'\overset{\sim}{\to}G/P$,
 and hence the conclusion follows. 
\end{proof}

We note
 that the parabolic subgroup $P$ in Theorem \ref{thm:aips} is maximal.

\begin{example}\label{ex:aips}
For simplicity,
 we use $GL(2n,{\mathbb{R}})$ 
instead of the semisimple group $SL(2n,{\mathbb{R}})$
 in the fourth row,
 and consider the reductive symmetric pair
$(G,G')=(GL(2n,\mathbb{R}),Sp(n,\mathbb{R}))$.  
Let $P$ be a maximal parabolic subgroup $P$ of $G$ with
Levi subgroup $L=GL(2n-1,\mathbb{R}) \times GL(1,{\mathbb{R}})$.  
Then $P$ has an abelian unipotent radical $\mathbb{R}^{2n-1}$
and $P'=G'\cap P$ has a non-abelian unipotent radical which is
isomorphic to the Heisenberg group $H^{2n-1}$.
In this case
the unitary representation 
$\pi =\operatorname{Ind}_P^G(\tau)$
 is irreducible as a representation of $G$
 for any unitary character $\tau$ of $P$.  
On the other hand,
the restriction of $\pi$
 to $G'$ is more delicate.  
It stays irreducible for generic $\tau$ 
(i.e. $d \tau \ne 0$)
and splits into
two irreducible representations of $G'$ for singular $\tau$,
giving rise to a `special unipotent representation'
of $G'=Sp(n,\mathbb{R})$.
See \cite{GASUR} 
 for a detailed analysis in connection with the Weyl operator calculus.  
\end{example}

\begin{remark}\label{rem:aips}
For a complex reductive group,
the underlying $(\mathfrak{g},K)$-modules of (degenerate) principal
series representations are isomorphic to some
$A_{\mathfrak{q}}(\lambda)$. 
Thus the first three cases in Theorem \ref{thm:aips} have already
appeared in Theorem \ref{thm:Aqai} in the context of
$A_{\mathfrak{q}}(\lambda)$. 
\end{remark}

\subsection{Irreducible restriction of minimal
representation}
\label{subsec:nilp}

Thirdly, we present an example of almost irreducible branching laws
for representations $\pi$ which are supposed to be attached to
minimal nilpotent coadjoint orbits.

Let $\varpi$ be the irreducible unitary representation of the
indefinite orthogonal group
$G = O(p,q)$ for $p,q \ge 2$, $(p,q) \ne (2,2)$ and $p+q$ even,
constructed in \cite{xbz} or \cite[Part I]{xkor:2}.
It is a representation of Gelfand--Kirillov dimension $p+q-3$,
and is \emph{minimal} in the sense that its annihilator in the
enveloping algebra $U(\mathfrak{g})$ is the Joseph ideal if $p+q>6$.
\begin{theorem}[$O(p,q) \downarrow O(p,q-1)$]
\label{thm:ain}
\[
\varpi|_{O(p,q-1)} \simeq V_+ + V_-
\]
where $V_\pm$ are irreducible representations of 
$O(p,q-1)$.
\end{theorem}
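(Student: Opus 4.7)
The approach exploits the fact that $(G,G')=(O(p,q),O(p,q-1))$ is a symmetric pair realized as $G' = G^{\sigma}$ for the inner involution $\sigma = \operatorname{Ad}(w)$ with $w=\operatorname{diag}(1,\ldots,1,-1)\in G$. Since $w$ itself lies in $G$ and centralizes $G'$, the unitary operator $\varpi(w)$ commutes with the restricted action of $G'$ and is involutive, yielding a canonical $G'$-stable orthogonal decomposition
\[
  \varpi=V_{+}\oplus V_{-}
\]
into its $\pm 1$-eigenspaces. In the Schr\"odinger-type $L^{2}$-model of $\varpi$ on the isotropic cone $C\subset\mathbb{R}^{p+q}$ from \cite{xkor:2}, $\varpi(w)$ acts by $\xi_{p+q}\mapsto-\xi_{p+q}$, so $V_{\pm}$ correspond to the even and odd $L^{2}$-functions of the last coordinate. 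The real content of the theorem is thus the irreducibility of each $V_{\pm}$ as a $G'$-module.

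The plan is first to establish that $\varpi|_{G'}$ is infinitesimally discretely decomposable. Combining the explicit $K$-type decomposition of the minimal representation in terms of spherical harmonics (see \cite{xkor:2}) with classical $O(q)\downarrow O(q-1)$ branching, one verifies that $\varpi$ is $K'$-admissible with $K' = O(p)\times O(q-1)$; the criterion in Theorem \ref{thm:sufadm} then gives discrete decomposability of the restriction to $G'$. Moreover, the parity involution $\varpi(w)$ commutes with the $K'$-action and splits each $K'$-isotypic component of $\varpi$ into parity-even and parity-odd pieces, which lie in $V_{+}$ and $V_{-}$ respectively.

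The most substantive step is to show that $V_{+}$ and $V_{-}$ are themselves irreducible. I would identify $V_{+}$ with (the unitarization of) the minimal representation of $G'=O(p,q-1)$, constructed by the same $L^{2}$-on-the-cone recipe of \cite{xkor:2} applied one rank lower, and $V_{-}$ with a closely related irreducible $A_{\mathfrak{q}}(\lambda)$-module carried on odd sections in $\xi_{p+q}$; a comparison of $K'$-type patterns should match these candidates precisely with the two parity halves of $\varpi$. The main obstacle is to rule out any further splitting of $V_{\pm}$: after $K'$-admissibility one only knows that each is a finite direct sum of irreducible $(\mathfrak{g}',K')$-modules, and to pin down a single constituent in each parity the cleanest route is to exhibit explicit first-order differential operators on $C$ implementing root vectors of $\mathfrak{g}'_{\mathbb{C}}$ and to verify that these link the full $K'$-spectrum of each $V_{\pm}$; alternatively, when $p+q-1>6$ one may invoke the characterization of the minimal representation by its Joseph ideal annihilator in $U(\mathfrak{g}')$, handling the small-rank exceptions by the explicit formulas already available in \cite{xbz}.
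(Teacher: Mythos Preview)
The paper does not argue this in place; it simply cites \cite[Corollary~7.2.1]{xkor:2}, where the decomposition arises as the extreme case $(p'',q'')=(0,1)$ of the general branching law for $\varpi|_{O(p',q')\times O(p'',q'')}$. Your structural outline---split by the central involution $\varpi(w)$, establish $K'$-admissibility from the spherical-harmonic $K$-types via classical $O(q)\downarrow O(q-1)$ branching, and then match each parity half with a known irreducible of $G'$---is exactly the shape of the argument carried out there.

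The substantive gap is your proposed identification of $V_{+}$ with the minimal representation of $G'=O(p,q-1)$: this is false, and consequently the Joseph-ideal shortcut you sketch cannot close the argument. A $K'$-type count already rules it out. The $K$-types of $\varpi$ form a single ladder $\mathcal{H}^{a}(\mathbb{R}^{p})\boxtimes\mathcal{H}^{b}(\mathbb{R}^{q})$ with $b-a$ fixed; restricting the second factor to $O(q-1)$ produces \emph{every} $\mathcal{H}^{c}(\mathbb{R}^{q-1})$ with $0\le c\le b$, so for each fixed $a$ the module $V_{+}$ contains on the order of $a$ distinct $K'$-types $\mathcal{H}^{a}\boxtimes\mathcal{H}^{c}$ rather than a single one. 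Summing dimensions gives Gelfand--Kirillov dimension $p+q-3$ for $V_{\pm}$ as $\mathfrak{g}'$-modules, whereas a minimal representation of $O(p,q-1)$ would be a ladder module of Gelfand--Kirillov dimension $p+q-4$. (Note also that $p+(q-1)$ is odd under the standing hypothesis that $p+q$ is even, so ``the same recipe one rank lower'' does not literally apply.) As Remark~\ref{rem:ain} indicates, in \cite{xkor:2} the pieces $V_{\pm}$ are instead identified with the irreducible unitary representations realized by the smallest $L^{2}$-eigenvalues of the Laplacian on the pseudo-Riemannian space forms of $O(p,q-1)$---equivalently, specific irreducible constituents of a degenerate principal series of $G'$---and their irreducibility is imported from the analysis of those series. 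Your differential-operator route linking $K'$-types could in principle be made to work, but only after replacing the minimal representation by these correct targets.
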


\begin{proof}
See \cite[Corollary 7.2.1]{xkor:2}.  
\end{proof}

\begin{remark}
\label{rem:ain}
The irreducible decomposition $V_+ + V_-$
 has a geometric meaning
 in connection to the smallest $L^2$-eigenvalues
 of the (ultra-hyperbolic) Laplacian on pseudo-Riemannian space forms.
\end{remark}

\section{Multiplicity-free conjecture}
\label{subsec:mfconj}

Irreducible restrictions to reductive subgroups are a somewhat
rare phenomenon, as we have seen in the previous section.
On the other hand, it happens more often that
the restriction is multiplicity-free
 with respect to reductive symmetric pairs
 $(G, G')$ (see \cite{xrims40}
 for examples).
In this section, we propose a conjectural sufficient condition
 for the restriction $\pi|_{G'}$ to be multiplicity-free
  in the setting where
 $\pi_K$ is a Zuckerman derived functor module $A_{\mathfrak q}(\lambda)$.
Our conjecture is motivated by the propagation theorem
 of multiplicity-free property
 under \lq{visible actions}\rq\
 \cite{xvissym}.  

\begin{definition}\label{def:qsymm}
1)\enspace 
We say a $\theta$-stable parabolic subalgebra
 $\mathfrak{q}={\mathfrak {l}}+ {\mathfrak {u}}$ is of 
\textit{symmetric type}
 if $(\mathfrak{g},\mathfrak{l})$ forms a
symmetric pair.

2)\enspace
We say that $\mathfrak{q}$ is of \textit{virtually symmetric type} if there
exists a $\theta$-stable parabolic subalgebra
$\widetilde{\mathfrak{q}}$ of symmetric type such that
$\widetilde{L}/L\equiv N_G(\widetilde{\mathfrak{q}})/N_G(\mathfrak{q})$
is compact. 
\end{definition}
\begin{remarkwn}
1) \enspace
If $\mathfrak{q}$ is of virtually symmetric type,
then we have a fibration 
$
\widetilde{L}/L \to G/L \to G/\widetilde{L}
$
 with compact fiber $\widetilde L/L$.  

2)\enspace
If $\mathfrak{q}$ is of symmetric type,
then
$\mathfrak{q}$ is obviously of virtually symmetric type.

3)\enspace
Any parabolic subalgebra is of virtually symmetric type
 if $G$ is compact.
\end{remarkwn}

Let $\mathfrak{q}$ be a $\theta$-stable
parabolic subalgebra of $\mathfrak{g}_{\mathbb{C}}$,
and $\overline{A_{\mathfrak {q}}(\lambda)}$
 be the unitarization of
$A_{\mathfrak{q}}(\lambda)$. 
Suppose $(\mathfrak{g},\mathfrak{g}')$ is a reductive symmetric pair.
We then propose the following two conjectures:
\begin{conjecture}\label{conj:mf}
If a $\theta$-stable parabolic subalgebra $\mathfrak{q}$ is of
symmetric type,
then the restriction $\overline{A_{\mathfrak {q}}(\lambda)}|_{G'}$
 is multiplicity-free for
sufficiently regular $\lambda$.
\end{conjecture}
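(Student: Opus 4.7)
The plan is to realize $\overline{A_{\mathfrak{q}}(\lambda)}$ geometrically and then appeal to the propagation theorem of multiplicity-freeness under visible actions \cite{xvissym}, which the author cites as the motivation for the conjecture.

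First I would fix a geometric model. Since $\mathfrak{q} = \mathfrak{l} + \mathfrak{u}$ is $\theta$-stable, $G/L$ carries a $G$-invariant complex structure coming from the identification $T_{eL}(G/L) \simeq \mathfrak{g}_{\mathbb{C}}/\mathfrak{q}$, and $\mathcal{L}_\lambda := G \times_L \mathbb{C}_\lambda$ is a $G$-equivariant holomorphic line bundle for $\lambda$ integral on $L$. For $\lambda$ in the good range one has the Dolbeault realization
\[
 \overline{A_{\mathfrak{q}}(\lambda)} \;\simeq\; H^{s}_{\bar\partial}\bigl(G/L,\,\mathcal{L}_{\lambda+2\rho(\mathfrak{u})}\bigr),\qquad s=\dim_{\mathbb{C}}(\mathfrak{u}\cap\mathfrak{k}_{\mathbb{C}}),
\]
with a $G$-invariant Hilbert completion. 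The regularity hypothesis in the conjecture is used precisely to ensure this single-degree realization and the nonvanishing/irreducibility of $A_{\mathfrak{q}}(\lambda)$.

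Next I would reduce multiplicity-freeness of the restriction to a visibility statement on $G/L$. Concretely, I would try to establish that the $G'$-action on the complex manifold $D = G/L$ is strongly visible in the sense of Kobayashi \cite{xvissym}, i.e., that there exist an anti-holomorphic diffeomorphism $\sigma$ of $D$ and a totally real submanifold $S \subset D^\sigma$ meeting every $G'$-orbit, together with the compatibility $\sigma|_S = \mathrm{id}_S$ and $\sigma$ preserving each $G'$-orbit. In our setting two commuting involutions are available: the Cartan-type involution $\theta_L$ making $(\mathfrak{g},\mathfrak{l})$ symmetric (given by the hypothesis that $\mathfrak{q}$ is of symmetric type) and the involution $\sigma_{G'}$ defining the symmetric pair $(G,G')$. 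After arranging $[\theta_L,\sigma_{G'}]=1$ (always possible by conjugating $\sigma_{G'}$ within its inner class), the product $\sigma := \theta_L\sigma_{G'}$ descends to an anti-holomorphic involution of $D$, and I would take $S$ to be a connected component of a maximal flat through the base point in the $\sigma$-fixed set, together with an appropriate finite set of Weyl translates. Once strong visibility is in place, the propagation theorem produces a multiplicity-free action on the fiber of $\mathcal{L}_\lambda$ over $S$, and then on $H^s_{\bar\partial}(D,\mathcal{L}_\lambda)$, which is exactly the desired conclusion.

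The main obstacle is the visibility step: the pair of commuting involutions gives a candidate slice $S$, but proving that $G'\cdot S = D$ requires a double-coset or KAK-type decomposition $G = G' \cdot A \cdot L$ (with $A$ the corresponding abelian slice), which is classical in the Riemannian case but needs a case-by-case verification for indefinite symmetric pairs of Hermitian or elliptic type. A secondary difficulty is the line-bundle condition: one must check that $\mathcal{L}_{\lambda+2\rho(\mathfrak{u})}$ admits a $\sigma$-equivariant anti-holomorphic lift acting by scalars on each $G'$-orbit, which constrains $\lambda$ to be real with respect to a Cartan involution compatible with $\sigma_{G'}$ — a condition I expect to be automatic for the good-range parameters relevant to the conjecture but which must be verified. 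If both points are established uniformly, the conjecture follows; otherwise, the argument naturally reduces the problem to a finite list of symmetric pairs on which the visibility can be checked directly, paralleling the case analysis already performed in Theorem \ref{thm:Aqai}.
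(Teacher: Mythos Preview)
The statement you are attempting to prove is labeled a \emph{Conjecture} in the paper, and the paper gives no proof; it offers only supporting evidence in specific families (Examples~\ref{ex:holo}--\ref{ex:GL}), each verified by explicit branching laws or by the Plancherel formula rather than by a uniform geometric argument. So there is no ``paper's own proof'' to compare against: you are sketching a research program, not reproducing a known argument.

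Your strategy does match the author's stated motivation (the propagation theorem for visible actions), and the obstacles you flag are real --- indeed, they are precisely why this remains a conjecture. Two further gaps deserve emphasis. First, the propagation theorem of \cite{xvissym,mf-korea} is formulated for spaces of \emph{holomorphic sections}, i.e.\ degree-zero Dolbeault cohomology; for a general $\theta$-stable $\mathfrak{q}$ the module $A_{\mathfrak{q}}(\lambda)$ sits in degree $s=\dim_{\mathbb{C}}(\mathfrak{u}\cap\mathfrak{k}_{\mathbb{C}})>0$, and extending the propagation mechanism to higher cohomology is itself an open problem, not a routine step. The cases in Examples~\ref{ex:holo} and \ref{ex:mfhw} where the visible-action method has been pushed through are exactly the holomorphic ($s=0$) cases. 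Second, your proposed anti-holomorphic involution $\sigma=\theta_L\sigma_{G'}$ need not act anti-holomorphically on $G/L$ in general: whether $\theta_L$ reverses the complex structure depends on the Hermitian-type condition on $(\mathfrak{g},\mathfrak{l})$, which is not guaranteed by $\mathfrak{q}$ being of symmetric type alone. So even granting the double-coset decomposition $G=G'\!\cdot A\cdot L$, the strong-visibility data you need may simply not exist for many of the pairs covered by the conjecture (e.g.\ Example~\ref{ex:GL}, where the conclusion is obtained instead via the Plancherel theorem for a reductive symmetric space).

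In short: your outline is a faithful expansion of the author's heuristic, but it is not a proof, and the paper does not claim one.
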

\begin{conjecture}\label{conj:bdd}
If $\mathfrak{q}$ is of virtually symmetric type,
then the restriction
$\overline{A_{\mathfrak {q}}(\lambda)}|_{G'}$
 has a uniformly bounded multiplicity.
\end{conjecture}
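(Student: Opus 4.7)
My plan is to bootstrap Conjecture \ref{conj:mf} to the virtually symmetric setting by exploiting the compact fibration $\widetilde{L}/L \to G/L \to G/\widetilde{L}$ provided by the hypothesis. The intuition is that the restriction $\overline{A_{\mathfrak{q}}(\lambda)}|_{G'}$ should differ from the restriction of a symmetric-type module only by a finite-dimensional twist coming from the compact fiber, which can only inflate the multiplicities by a bounded factor that is allowed to depend on $\lambda$ but not on the spectrum in $\widehat{G'}$.

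First I would establish a reduction identity at the level of Harish-Chandra modules. Since $\mathfrak{l} \subset \widetilde{\mathfrak{l}}$, and correspondingly $\mathfrak{q} \subset \widetilde{\mathfrak{q}}$ after a compatible choice of positive systems, induction in stages for Zuckerman derived functors should yield an isomorphism
\[
A_{\mathfrak{q}}(\lambda) \simeq A_{\widetilde{\mathfrak{q}}}(F_\lambda),
\]
where $F_\lambda$ is the finite-dimensional $\widetilde{L}$-module obtained by Borel--Weil--Bott on the compact complex manifold $\widetilde{L}/L$ applied to $\mathbb{C}_\lambda$, with the usual $\rho$-shifts. Geometrically this is the Leray spectral sequence for the Dolbeault cohomology of the line bundle $\mathcal{L}_\lambda$ pushed forward along $\pi: G/L \to G/\widetilde{L}$, together with the geometric realization of $A_{\mathfrak{q}}(\lambda)$ recalled in the proof of Theorem \ref{thm:Aqai}.

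Next I would decompose $F_\lambda = \bigoplus_i F_\lambda^{(i)}$ into irreducible $\widetilde{L}$-modules, a finite sum of length $N(\lambda)$ depending on $\lambda$ but not on $\tau \in \widehat{G'}$. By additivity of cohomological induction in the good range one obtains
\[
A_{\mathfrak{q}}(\lambda) \simeq \bigoplus_{i=1}^{N(\lambda)} A_{\widetilde{\mathfrak{q}}}(F_\lambda^{(i)}),
\]
each summand being attached to the symmetric-type parabolic $\widetilde{\mathfrak{q}}$. Applying Conjecture \ref{conj:mf} (suitably extended to vector-valued coefficients) to each summand yields a multiplicity-free restriction to $G'$, so that the total multiplicity in $\overline{A_{\mathfrak{q}}(\lambda)}|_{G'}$ is bounded above by $N(\lambda)$, which is finite and uniform across $\widehat{G'}$; this is exactly the statement of Conjecture \ref{conj:bdd}.

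The main obstacle is that Conjecture \ref{conj:mf} is formulated only for scalar (line-bundle) coefficients, whereas the decomposition above involves irreducible $\widetilde{L}$-modules of arbitrary dimension. Bridging this gap will require either extending the propagation theorem for multiplicity-free property under visible actions \cite{xvissym} from equivariant line bundles to equivariant holomorphic vector bundles of finite rank on $G/\widetilde{L}$ (plausible, since visibility of the $G'$-action on $G/\widetilde{L}$ is a geometric property independent of the coefficient bundle, and the bound should scale with the rank), or performing a further cohomological induction in stages that realizes each $F_\lambda^{(i)}$ as a Borel--Weil line bundle over a smaller flag variety of $\widetilde{L}$, thereby reducing to the scalar case. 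A secondary technical point is to justify the isomorphism $A_{\mathfrak{q}}(\lambda) \simeq A_{\widetilde{\mathfrak{q}}}(F_\lambda)$ outside the good range and to ensure the Leray spectral sequence degenerates in the relevant bidegrees; both should be manageable via Vogan's translation principle and the Hochschild--Serre argument already used in the proof of Theorem \ref{thm:Aqai}.
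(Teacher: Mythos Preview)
The statement you are attempting to prove is labelled \emph{Conjecture} in the paper, and indeed the paper offers no proof: it only records partial evidence (Examples~\ref{ex:holo}--\ref{ex:GL}) in special cases. So there is no ``paper's own proof'' to compare against; the problem is open.

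Your argument is not a proof of Conjecture~\ref{conj:bdd} but a conditional reduction, and the condition you need is strictly stronger than anything the paper asserts. You invoke Conjecture~\ref{conj:mf} as an input, but Conjecture~\ref{conj:mf} is itself open, so at best you are proving ``Conjecture~\ref{conj:mf} $\Rightarrow$ Conjecture~\ref{conj:bdd}''. In fact you need more: as you acknowledge, the irreducible $\widetilde L$-summands $F_\lambda^{(i)}$ need not be one-dimensional, so you must assume a vector-bundle version of Conjecture~\ref{conj:mf} that the paper neither states nor conjectures. Your proposed workaround of realizing each $F_\lambda^{(i)}$ as a Borel--Weil line bundle over a further flag variety of $\widetilde L$ does not close the gap, because the resulting $\theta$-stable parabolic in $\mathfrak{g}_{\mathbb C}$ is in general no longer of symmetric type, so Conjecture~\ref{conj:mf} is again inapplicable.

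The induction-in-stages identity $A_{\mathfrak q}(\lambda)\simeq A_{\widetilde{\mathfrak q}}(F_\lambda)$ and the finite-dimensionality of $F_\lambda$ coming from compactness of $\widetilde L/L$ are correct and standard; that part of your outline is sound. But the remaining step is precisely the hard content of the conjecture, and it is not supplied. You should present this as a heuristic reduction (Conjecture~\ref{conj:bdd} would follow from a suitable vector-valued strengthening of Conjecture~\ref{conj:mf}), not as a proof.
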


Here are some affirmative cases:
\begin{example}
\label{ex:holo}
Suppose $G$ is a non-compact simple Lie group
 such that $G/K$ is a Hermitian symmetric space.  
We write ${\mathfrak{g}}={\mathfrak {k}}+{\mathfrak {p}}$
 for the Cartan decomposition.  
Then ${\mathfrak {p}}_{\mathbb{C}}:={\mathfrak {p}}\otimes_{\mathbb{R}}
{\mathbb{C}}$ decomposes
 into a direct sum
 of two irreducible representations of $K$, 
say ${\mathfrak {p}}_{\mathbb{C}}={\mathfrak {p}}_+ \oplus {\mathfrak {p}}_-$.  
Then ${\mathfrak {q}}:={\mathfrak {k}}_{\mathbb{C}}+{\mathfrak {p}}_+$
 is a $\theta$-stable parabolic subalgebra
 of symmetric type.  
If $\lambda$ is in the good range,
 then $A_{\mathfrak {q}}(\lambda)$ is the underlying 
 $({\mathfrak {g}}, K)$-module
 of a holomorphic discrete series representation
 of scalar type.  
In this case,
 we see Conjecture \ref{conj:mf} holds
 by the explicit branching law:
\begin{alignat*}{2}
&G'=K
 &&\cdots\,\,
 \text{Hua \cite{xhua}, Kostant, Schmid \cite{xschthe}},
\\
&G':\text{ non-compact} \,\,&&\cdots\,\, \text{Kobayashi \cite{xrims40}}.  
\end{alignat*}
\end{example}
\begin{example}\label{ex:mfhw}
As a generalization of Example \ref{ex:holo},
 we retain that $G/K$ is a Hermitian 
 symmetric space,
 and assume that ${\mathfrak {q}}$ is of holomorphic type
 in the sense that ${\mathfrak {q}} \cap {\mathfrak {p}}_{\mathbb{C}}
 \supset {\mathfrak {p}}_+$.  
Then $A_{\mathfrak {q}}(\lambda)$
 is at most a finite
 direct sum of irreducible
 unitary highest weight modules
 if $\lambda$ is in the weakly fair range
 (see \cite{xadams}).  
In this case,
 Conjecture \ref{conj:bdd} is true
 for any $A_{\mathfrak {q}}(\lambda)$
(see \cite[Theorem B]{mf-korea}).
Further, it was proved in \cite[Theorems A, C]{mf-korea}
 as a special case of the propagation theorem
 of multiplicity-free property 
 that the restriction $\pi|_{G'}$
 is multiplicity-free
 if $\pi$ is
 an irreducible unitary highest weight module of scalar type.
\end{example}
\begin{example}\label{ex:bdd}
For
$(G,G')=(O(p,q),O(r)\times O(p-r,q))$
and for a $\theta$-stable parabolic subalgebra $\mathfrak{q}$ of
maximal dimension,
we see from explicit branching laws \cite{xk:1} that Conjecture
\ref{conj:mf} holds in this case.
Likewise, Conjecture \ref{conj:mf} holds for 
 the restriction $O(2p,2q)\downarrow U(p,q)$
 again by explicit branching laws \cite{xkdecomp}.
\end{example}

\begin{example}
\label{ex:cptmf}
For any compact group $G$, 
 the restriction $\pi_{\lambda}|_{G'}$
 is always multiplicity-free
 if ${\mathfrak {q}}$ is of symmetric type 
 (\cite[Theorems E, F]{mf-korea})
 and hence,
 Conjecture \ref{conj:mf} is true.  
\end{example}

\begin{example}
\label{ex:GL}
Let $(G,G')=(GL(2n,{\mathbb{C}}),
 GL(n,{\mathbb{C}}) \times GL(n, {\mathbb{C}}))$, 
 and ${\mathfrak {q}}$ a $\theta$-stable
 parabolic subalgebra
 such that $N_G({\mathfrak {q}}) \simeq G'$.  
Then ${\mathfrak {q}}$ is of symmetric type.  
Further,
 we have the following unitary equivalence:
\[
  \overline{A_{{\mathfrak {q}}}(\lambda)}|_{GL(n, \mathbb C)\times GL(n, \mathbb C)}
 \simeq L^2(GL(n,{\mathbb{C}})).  
\]
Thanks to the Plancherel formula
 of the group $GL(n,{\mathbb{C}})$ due to the 
 Gelfand school
 and Harish-Chandra,
 we see that Conjecture \ref{conj:mf}
 holds also in this case.  
 
 Let us retain the same $\theta$-stable parabolic subalgebra $\mathfrak q$
 and  consider another reductive symmetric pair
 $(G, G'') = (GL(2n, \mathbb C), GL(2n, \mathbb R))$.
Then, we get the following unitary isomorphism:
 \[
  \overline{A_{{\mathfrak {q}}}(\lambda)}|_{GL(2n, \mathbb R)}
 \simeq L^2(GL(2n,{\mathbb{R}})/GL(n, \mathbb C)).  
\]
Again, the right-hand side is multiplicity-free
 by the Plancherel formula for reductive symmetric space
 due to Oshima, van den Ban, Schlichtkrull, and Delorme \cite{xdelorme}
 among others. 
 (It should be noted that the Plancherel formula
for a reductive symmetric space is 
not multiplicity-free
 in general.)
\end{example}

\begin{remark}\label{rem;mf}
As we have seen in Example~\ref{ex:GL},
Conjectures \ref{conj:mf} and \ref{conj:bdd} refer to the
multiplicities in both discrete and continuous spectrum in the
branching law $\overline{A_{\mathfrak {q}}(\lambda)}|_{G'}$.
\end{remark}

\section{Discretely decomposable branching laws}
\label{sec:deco}

This section highlights another nice class of branching
problems, 
namely,
when the restriction $\pi|_{G'}$ splits discretely without continuous
spectrum. 

An obvious case is when $\dim \pi < \infty$
 or when $G'$ is compact.  
One of the advantages of discretely decomposable restrictions 
 is that we can expect a
combinatorial and detailed study of branching laws by purely
algebraic methods 
because we do not have analytic difficulties 
 arising from continuous spectrum.

Prior to \cite{xk:1}, 
discretely
 decomposable restrictions $\pi|_{G'}$ were known
 in some specific settings, 
e.g.\ the $\theta$-correspondence
 for the Weil representation
 with respect to compact dual pair \cite{xhowe}, 
 or when $\pi$ is a holomorphic discrete series representation
 and $G'$ is a Hermitian Lie group 
 \cite{xjv}.
A systematic study in the general case including Zuckerman derived
functor modules $A_{\mathfrak{q}}(\lambda)$
was initiated by the author in a series of papers
\cite{xkdecomp,xkdecoalg,xkdecoass,xkshunki,xkicm}.
See \cite{xgrwaII,xk:1,xkdecomp,xkor:2,xspeh}
for a number of concrete examples of branching laws $\pi|_{G'}$ in 
this framework,
\cite{xoda-so} for some application to modular symbols, 
 \cite{xkdisc} for the construction
 of new discrete series representations
 on non-symmetric spaces.
See also the lecture notes \cite{deco-euro} 
 for a survey on representation theoretic aspects,
and \cite{xkshunki, xkicm} for some applications.

In this section,
 we give a brief overview of discretely decomposable restrictions
 including some recent developments
 and open problems.

\subsection{Infinitesimally discretely decomposable restrictions}
\label{sec:infdeco}

Let us begin with an algebraic formulation.  
Suppose ${\mathfrak {g}}'$ is a Lie algebra.  
\begin{definition}
\label{def:algdeco}
A $\mathfrak{g}'$-module $V$ is said to be 
\textit{discretely decomposable} if there exists an increasing
filtration $\{V_n\}$ such that
$V=\bigcup_{n=0}^\infty V_n$
and each $V_n$ is of finite length as a $\mathfrak{g}'$-module.
\end{definition}

In the setting
 where $G'$ is a real reductive Lie group with maximal compact subgroup
$K'$, 
the terminology \lq{discretely decomposable}\rq\
 fits well if $V$ is a unitarizable $({\mathfrak {g}}', K')$-module,
 namely, 
if $V$ is the underlying $(\mathfrak{g}',K')$-module of a unitary
representation of $G'$:
\begin{remark}
[{\cite[Lemma 1.3]{xkdecoass}}]
\label{rem:algdeco}
Suppose $V$ is a unitarizable $(\mathfrak{g}',K')$-module.
Then $V$ is discretely decomposable
as a ${\mathfrak {g}}'$-module
 if and only if $V$ is decomposed into an 
 algebraic direct sum of irreducible $(\mathfrak{g}',K')$-modules.
\end{remark}

We apply Definition \ref{def:algdeco}
 to branching problems.
Let $G$ be a real reductive Lie group,
and $G'$ a reductive subgroup of $G$.
We may and do assume that $K$ is a maximal compact subgroup of $G$
 and $K':=K \cap G'$
 is that of $G'$.  
\begin{definition}\label{def:decores}
Let $\pi$ be a unitary representation of $G$
of finite length.
We say the restriction $\pi|_{G'}$ is {\it{infinitesimally discretely
decomposable}}
 if the underlying $(\mathfrak{g},K)$-module $\pi_K$ is
discretely decomposable as a $\mathfrak{g}'$-module.
\end{definition}

Here is a comparison between the category
 of unitary representations
 and that of $({\mathfrak {g}},K)$-modules:
\begin{conjecture}
\label{conj:deco}
Let $\pi$ be an irreducible unitary representation of $G$,
and $G'$ a reductive subgroup of $G$.
Then the following two conditions on $(G,G',\pi)$ are equivalent:
\begin{enumerate}[\upshape(i)]
\item  
The restriction $\pi|_{G'}$ is infinitesimally discretely decomposable.
\item  
The unitary representation
 $\pi$ decomposes discretely into a direct sum of irreducible unitary
representations of $G'$.
\end{enumerate}
\end{conjecture}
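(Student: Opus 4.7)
The plan is to handle the two implications separately; the substantive content lies in (i)$\Rightarrow$(ii).

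The implication (ii)$\Rightarrow$(i) should be essentially formal. Starting from a Hilbert-space decomposition $\pi|_{G'}\simeq\bigoplus_i m_i\sigma_i$ (understood as an orthogonal Hilbert sum), the key elementary observation is that every $K$-finite vector is automatically $K'$-finite, since $K'\subset K$ and the finite-dimensional span $K\cdot v$ contains $K'\cdot v$. Hence $\pi_K\subset\pi_{K'}$, and the latter coincides with the algebraic direct sum $\bigoplus_i m_i(\sigma_i)_{K'}$ of irreducible $(\mathfrak{g}',K')$-modules. Being a $\mathfrak{g}'$-submodule of this semisimple module, $\pi_K$ itself decomposes algebraically as a direct sum of irreducibles drawn from among the $(\sigma_i)_{K'}$; by Remark \ref{rem:algdeco} this is precisely infinitesimal discrete decomposability.

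For (i)$\Rightarrow$(ii), write the hypothesis as $\pi_K=\bigoplus_j V_j$, an algebraic direct sum of irreducible $(\mathfrak{g}',K')$-modules, and attempt to produce an orthogonal Hilbert-space decomposition $\pi|_{G'}\simeq\bigoplus_j H_j$ with $(H_j)_{K'}=V_j$. The natural three-step strategy is: \emph{(1)} show that each $V_j$ is unitarizable and its closure $H_j$ in $\pi$ is an irreducible closed $G'$-subrepresentation with $(H_j)_{K'}=V_j$; \emph{(2)} verify pairwise orthogonality of the $H_j$ by Schur-type arguments on matrix coefficients, handling equivalent summands by an orthogonalization inside their multiplicity space; \emph{(3)} deduce completeness from the density of $\pi_K=\bigoplus_j V_j$ in $\pi$, which together with orthogonality forces $\pi$ to be the Hilbert sum of the $H_j$'s, leaving no room for continuous spectrum.

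The main obstacle is step \emph{(1)}, particularly the identification $(H_j)_{K'}=V_j$ and the positive-definiteness of the ambient Hermitian form restricted to $V_j$. A priori the Hilbert closure $\overline{V_j}$ inside $\pi$ could carry $G'$-smooth vectors lying outside $V_j$, or the intrinsic unitarization of the Harish-Chandra module $V_j$ (unique up to scalar by Harish-Chandra's uniqueness of invariant forms) might disagree with the restriction of the ambient inner product on $\pi$. Ruling this out requires controlling the growth of matrix coefficients $\langle\pi(g')u,v\rangle$ for $u,v\in V_j$ and showing that they are $L^2$ on $G'$ rather than merely tempered. A promising route is to combine Casselman--Wallach smooth globalization applied to $V_j$ as a $(\mathfrak{g}',K')$-module with the uniqueness of invariant forms; a cleaner-but-weaker path first reduces to the $K'$-admissible setting via the associated-variety criterion referenced in the remark following `False Theorem' \ref{thm:false}, where $\pi|_{K'}$ breaks into finite-dimensional $K'$-isotypes and the pathologies above disappear. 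Bridging the gap between the purely algebraic hypothesis (i) and this admissible regime in full generality is precisely what keeps the equivalence a conjecture rather than a theorem.
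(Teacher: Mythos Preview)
You have the two directions exactly backwards relative to the paper. The paper states explicitly (in the paragraph immediately following the conjecture) that the implication (i)$\Rightarrow$(ii) \emph{holds in general}, with a reference to \cite[Theorem~2.7]{xkdecoaspm}, while the converse (ii)$\Rightarrow$(i) \emph{remains open}, with only partial results known (Duflo--Vargas for discrete series, etc.). So the direction you call ``essentially formal'' is the genuinely open one, and the direction you struggle with is already a theorem in the literature.

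The gap in your argument for (ii)$\Rightarrow$(i) is the sentence ``the latter coincides with the algebraic direct sum $\bigoplus_i m_i(\sigma_i)_{K'}$.'' This is false in general. If $\pi|_{G'}\simeq\widehat{\bigoplus}_i m_i\sigma_i$ is a Hilbert direct sum with infinitely many summands (or with some $m_i=\infty$), a $K'$-finite vector $v\in\pi$ may have nonzero orthogonal projection onto infinitely many of the $\sigma_i$: being $K'$-finite means $v$ lies in a finite sum of $K'$-isotypic components of $\pi$, but a single $K'$-type $\tau$ can occur in infinitely many of the $\sigma_i$, so the $\tau$-isotypic subspace of $\pi$ need not meet only finitely many summands. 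Consequently $\pi_{K'}$ need not sit inside the \emph{algebraic} direct sum $\bigoplus_i m_i(\sigma_i)_{K'}$, and you cannot conclude that the $\mathfrak{g}'$-submodule $\pi_K$ is a sum of irreducible $(\mathfrak{g}',K')$-modules. This is precisely the obstruction that keeps (ii)$\Rightarrow$(i) conjectural.

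Conversely, the obstacles you list for (i)$\Rightarrow$(ii) --- closure of $V_j$ possibly acquiring extra $K'$-finite vectors, compatibility of inner products --- are handled in the cited reference; the point is that under (i) one can work entirely within $K'$-isotypic components, which are already $\mathfrak{g}'$-stable direct sums of finitely many irreducibles once discrete decomposability is assumed, and pass to the Hilbert completion summand by summand.
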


In general,
 the implication (i) $\Rightarrow$ (ii) holds.
Moreover, the branching law
 for the restriction of the unitary representation $\pi$
 to $G'$
 and that for the restriction of the $({\mathfrak {g}},K)$-module
 $\pi_K$ to $({\mathfrak {g}}',K')$ are 
 essentially the same under the assumption 
 (i) (see \cite[Theorem 2.7]{xkdecoaspm}).
The converse statement (ii) $\Rightarrow$ (i)
 remains open;
 affirmative results have
 been partially obtained by Duflo and Vargas \cite{xdv}
 for discrete series representations $\pi$,
 see also \cite[Conjecture D]{xkdecoaspm}
 and \cite{xzhuliang}.

For the study of discretely decomposable
 restrictions,
 the concept of $K'$-admissible restrictions is useful:
\begin{proposition}
\label{prop:admKprime}
If the restriction $\pi|_{K'}$ is $K'$-admissible
 then both the conditions (i) and (ii) in Conjecture \ref{conj:deco}
 hold.  
\end{proposition}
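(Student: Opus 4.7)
The plan is to establish (i) via Remark \ref{rem:algdeco} — reducing to the claim that $\pi_K$ is an algebraic direct sum of irreducible $(\mathfrak{g}',K')$-modules — and then to deduce (ii) from (i) using the implication recorded after Conjecture \ref{conj:deco}.

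I would first pass to the $K'$-isotypic picture. By Harish-Chandra's admissibility theorem (Fact \ref{fact:HCadm}), $\pi$ is $K$-admissible, so $\pi_K$ is a countable algebraic direct sum of finite-dimensional $K$-types. Restricting each $K$-type to $K'$, the hypothesis $\dim \operatorname{Hom}_{K'}(\tau',\pi) < \infty$ for every $\tau' \in \widehat{K'}$ translates into the statement that each $K'$-isotypic component $\pi_K[\tau']$ is finite dimensional and that
$$ \pi_K = \bigoplus_{\tau' \in \widehat{K'}} \pi_K[\tau'] $$
as a $K'$-module.

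The key step is a local finite-length result. For any $v \in \pi_K$, the $(\mathfrak{g}',K')$-submodule $M_v := U(\mathfrak{g}') \cdot (K' \cdot v)$ is finitely generated and inherits $K'$-admissibility from $\pi_K$. The main technical input — and the point where $K'$-admissibility really does the work — is the standard fact that a finitely generated, $K'$-admissible $(\mathfrak{g}',K')$-module has finite length; this can be proved by analyzing the action of the center $Z(\mathfrak{g}')$ on each $K'$-isotypic component and using that $Z(\mathfrak{g}')$ acts locally finitely on a finitely generated admissible module, which forces any ascending chain of $(\mathfrak{g}',K')$-submodules to stabilize. Being a submodule of the unitarizable $(\mathfrak{g}',K')$-module $\pi_K$, $M_v$ is itself unitarizable, and a unitarizable module of finite length is completely reducible; hence $M_v$ decomposes as a finite algebraic direct sum of irreducible $(\mathfrak{g}',K')$-modules.

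Once every vector lies in such a finite direct sum of irreducibles, a Zorn's lemma argument — applied to the poset of families of irreducible $(\mathfrak{g}',K')$-submodules of $\pi_K$ whose internal sum is direct — yields a maximal algebraic direct sum $\bigoplus_\alpha X_\alpha \subseteq \pi_K$; maximality combined with the previous paragraph forces the inclusion to be an equality, and by Remark \ref{rem:algdeco} this establishes (i). Finally, (ii) follows from (i) by the implication noted after Conjecture \ref{conj:deco}: each irreducible $X_\alpha$ is unitarizable with inner product induced by restriction from $\pi$, and its Hilbert-space completion embeds as a closed irreducible $G'$-invariant subspace of $\mathcal{H}_\pi$; since $\pi_K = \bigoplus_\alpha X_\alpha$ is dense in $\mathcal{H}_\pi$, these completions exhaust $\mathcal{H}_\pi$ as a Hilbert direct sum, yielding the discrete decomposition (ii) with no continuous spectrum.
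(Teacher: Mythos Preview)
Your argument is correct and is essentially the standard proof behind the citations the paper gives; the paper itself offers no argument beyond referring to \cite[Proposition~1.6]{xkdecoass} for (i) and \cite[Theorem~1.2]{xkdecomp} for (ii).  Two remarks.  First, a slightly shorter route to (i) bypasses Remark~\ref{rem:algdeco} and Zorn's lemma: enumerate the $K'$-types $\tau'_1,\tau'_2,\ldots$ and let $V_n$ be the $(\mathfrak{g}',K')$-submodule of $\pi_K$ generated by $\bigoplus_{j\le n}\pi_K[\tau'_j]$; each $V_n$ is finitely generated and $K'$-admissible, hence of finite length by the same fact you invoke, so Definition~\ref{def:algdeco} is verified directly.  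Your route, however, has the virtue of producing the explicit irreducible decomposition you then use in the passage to (ii).  Second, your sketch of (ii) is fine but glosses over one genuine technical point: to know that the closure $\overline{X_\alpha}$ in $\mathcal{H}_\pi$ is $G'$-invariant one uses that $X_\alpha$ consists of analytic vectors for $G'$, which holds here because $K$-finite vectors in a unitary representation of $G$ are analytic for $G$ and hence for the subgroup $G'$.  This is exactly the content of the general implication (i)~$\Rightarrow$~(ii) recorded after Conjecture~\ref{conj:deco}, so your deferral to it is appropriate.
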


\begin{proof}
See \cite[Proposition 1.6]{xkdecoass}
 and \cite[Theorem 1.2]{xkdecomp},
 respectively.  
\end{proof}
\subsection{Analytic approach}
\label{sec:ASK}

We now consider a criterion for the $K'$-admissibility
 of a representation $\pi$. 

Let $K'$ be a closed subgroup of $K$.
Associated to the Hamiltonian $K$-action on the cotangent bundle
$T^*(K/K')$,
we consider the momentum map
\[
\mu: T^*(K/K') \to \sqrt{-1} \mathfrak{k}^*.
\]
Then its image equals 
$\sqrt{-1} \operatorname{Ad}^*(K)({\mathfrak {k}}')^{\perp}$, 
 where
$(\mathfrak{k}')^\bot$ is the kernel of the projection
$
\operatorname{pr}_{\mathfrak{k}\to\mathfrak{k}'}:
\mathfrak{k}^*\to(\mathfrak{k}')^*
$,
the dual to the inclusion $\mathfrak{k}' \subset \mathfrak{k}$ of Lie
algebras. 
The momentum set $C_K(K')$ is defined
 as the intersection of $\operatorname{Image}\mu$
 with a dominant Weyl chamber $C_+$
 $(\subset \sqrt{-1}{\mathfrak {t}}^*)$
 with respect to a fixed positive system
 $\Delta^+({\mathfrak {k}}, {\mathfrak {t}})$
 and a Cartan subalgebra ${\mathfrak {t}}$ of ${\mathfrak {k}}$:
\begin{equation}\label{eqn:6.3.1}
C_K(K')
:=\Cp\cap\sqrt{-1}\operatorname{Ad}^*(K)(\mathfrak{k}')^\bot.
\end{equation}
Here we regard ${\mathfrak {t}}^*$
 as a subspace of ${\mathfrak {k}}^*$
 via a $K$-invariant non-degenerate
 bilinear form on ${\mathfrak {k}}$.  

Next,
 let $\pi$ be a $K$-module.  
We write $\operatorname{AS}_K(\pi)$ for the asymptotic $K$-support
 introduced by Kashiwara and Vergne
\cite{xkashiv}, 
 that is, 
the limit cone of the set of highest weights
 of $K$-types in $\pi$.  
$\operatorname{AS}_K(\pi)$ is a closed cone in $C_+$.  

We are ready to state 
 a criterion for admissible restrictions.

\begin{theorem}
\label{thm:sufadm}
Let $G\supset G'$ be 
a pair of reductive Lie groups, 
 and take maximal compact subgroups 
$K\supset K'$, respectively. 
Suppose $\pi$ is an irreducible unitary representation of $G$.  

\begin{itemize}
\item[\upshape1)]
Then the following two conditions are equivalent:
\begin{itemize}
\item[\upshape(i)]
$C_K(K')\cap \operatorname{AS}_K(\pi) = \{ 0\}$.
\item[\upshape (ii)] The restriction $\pi|_{K'}$ is $K'$-admissible.
\end{itemize}
\item[\upshape2)]
If one of the equivalent conditions (i) or (ii)
 is fulfilled,
 then the restriction $\pi|_{G'}$ is infinitesimally discretely
 decomposable
 (see Definition \ref{def:decores}), 
 and the restriction $\pi|_{G'}$ is unitarily equivalent
 to the Hilbert direct sum:
\[
\pi|_{G'}\simeq
\sideset{}{^{\oplus}}\sum_{\tau\in G'} n_\pi(\tau)\tau
\qquad
\text{with }
n_\pi(\tau)<\infty
\quad
\text{for any}
\quad
\tau\in\widehat G'.    
\]
\end{itemize}
\end{theorem}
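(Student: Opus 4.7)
The plan is to reduce the criterion to a purely compact branching problem for $(K, K')$ and to invoke the microlocal theory of $K$-characters developed by Kashiwara and Vergne (\cite{xkashiv}); part 2 will then follow formally from Proposition \ref{prop:admKprime}.

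First I would reformulate (ii). Setting $m_\pi(\delta) := \dim \operatorname{Hom}_K(\delta, \pi)$ for $\delta \in \widehat{K}$ (finite by Fact \ref{fact:HCadm}) and writing $[\delta|_{K'} : \tau]$ for the multiplicity of $\tau \in \widehat{K'}$ in $\delta|_{K'}$, condition (ii) becomes
\[
\sum_{\delta \in \widehat{K}} m_\pi(\delta) \, [\delta|_{K'} : \tau] < \infty
\qquad \text{for every } \tau \in \widehat{K'}.
\]
The asymptotic cone of the support of $m_\pi$ is $\operatorname{AS}_K(\pi)$ by definition. The key analytic input from Kashiwara and Vergne, via the singular spectrum of the pushforward of $K$-characters along the projection $\operatorname{pr} : \mathfrak{k}^* \to (\mathfrak{k}')^*$, is that $[\delta|_{K'} : \tau]$, viewed as a function of $\delta$ with $\tau$ fixed, is polynomially bounded in the highest weight of $\delta$ in directions outside $C_K(K')$ and grows unboundedly along any ray inside $C_K(K')$.

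With this input the implication (i) $\Rightarrow$ (ii) is by asymptotic transversality: when $\operatorname{AS}_K(\pi) \cap C_K(K') = \{0\}$, the support of $m_\pi$ and the growth locus of $\delta \mapsto [\delta|_{K'} : \tau]$ are asymptotically disjoint cones, and a convergent majorant for the series above can be constructed on a conic neighbourhood of $\operatorname{AS}_K(\pi)$ on which $[\delta|_{K'} : \tau]$ is polynomially bounded in $\delta$. For the converse (ii) $\Rightarrow$ (i), I would argue by contradiction: a nonzero $\xi \in \operatorname{AS}_K(\pi) \cap C_K(K')$ produces a sequence of $K$-types $\delta_n$ in the support of $m_\pi$ whose highest weights tend to infinity along $\xi$; an appropriate choice of $\tau \in \widehat{K'}$ then supplies, via the Kashiwara-Vergne lower bound, a divergent contribution to the corresponding term of the series, contradicting (ii).

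For part 2, once (ii) is established, Proposition \ref{prop:admKprime} delivers both the infinitesimal discrete decomposability of $\pi|_{G'}$ and its decomposition as a Hilbert direct sum of irreducible $G'$-representations. The bound $n_\pi(\tau) < \infty$ follows because any $\tau \in \widehat{G'}$ occurring in $\pi|_{G'}$ carries at least one $K'$-type $\tau_0$, and $n_\pi(\tau)$ is dominated by $\dim \operatorname{Hom}_{K'}(\tau_0, \pi)$, which is finite by (ii). The main obstacle is the microlocal ingredient behind (i) $\Leftrightarrow$ (ii): extracting sharp two-sided asymptotic estimates for branching multiplicities from the Kashiwara-Vergne wavefront theorem, and translating the momentum-map description of $C_K(K')$ into uniform combinatorial control of $[\delta|_{K'} : \tau]$ over the dominant chamber $C_+$. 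The remaining passages — reducing admissibility to the compact setting and upgrading the algebraic decomposition of $\pi_K$ to a unitary direct sum — are then formal.
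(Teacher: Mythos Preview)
Your plan for (i) $\Rightarrow$ (ii) and for part~2) coincides with the paper's outline: the paper cites \cite[Theorem~2.8]{xkdecoalg} for (i) $\Rightarrow$ (ii), proved via the singular spectrum of the hyperfunction $K$-character in the spirit of \cite{xkashiv} (with the wave-front-set variant in \cite{xhaetal}), and it derives the discrete decomposition together with $n_\pi(\tau)<\infty$ from Proposition~\ref{prop:admKprime} and \cite[Theorem~2.9]{xkdecoalg}, exactly as you propose.

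The gap is in your sketch of (ii) $\Rightarrow$ (i). You invoke a ``Kashiwara--Vergne lower bound'' asserting that $[\delta|_{K'}:\tau]$ \emph{grows unboundedly} as the highest weight of $\delta$ tends to infinity along a ray in $C_K(K')$. The machinery of \cite{xkashiv} does not furnish such lower bounds: the singular spectrum of a hyperfunction character controls its Fourier coefficients from \emph{above} (rapid decay in directions outside the singular spectrum), not from below inside it. The claim is already false for $(K,K')=(SU(2),T)$, where $C_K(K')=C_+$ yet $[\delta|_{T}:\tau]\le 1$ for every $\delta$ and $\tau$. What actually makes (ii) fail when $\operatorname{AS}_K(\pi)\cap C_K(K')\neq\{0\}$ is not growth of individual branching multiplicities but the existence of infinitely many $K$-types of $\pi$ all containing a \emph{common} $K'$-type; turning that into a proof requires an argument that is not a consequence of the one-sided Kashiwara--Vergne estimate you cite. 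Your own diagnosis that ``sharp two-sided asymptotic estimates'' are the obstacle is correct, but the upper and lower halves come from genuinely different sources, and the lower half is the part your proposal has not supplied.
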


\begin{proof}
[Outline of Proof]
The proof of the implication (i) $\Rightarrow$ (ii)
 was proved first by the author \cite[Theorem 2.8]{xkdecoalg}
 by using the singularity spectrum of hyperfunction
 characters
 in a more general setting 
 where $\pi$ is just a $K$-module
 such that the multiplicity
\[
  m_{\pi}(\tau):=\dim\operatorname{Hom}_K(\tau, \pi)
\]
is of infra-exponential growth.  
In the same spirit, 
Hansen, Hilgert, and Keliny \cite{xhaetal}
gave an alternative proof 
 by using the wave front set of distribution characters
under the assumption
 that $m_{\pi}(\tau)$ is at most of polynomial growth.  
The last statement was proved
 in \cite[Theorem 2.9]{xkdecoalg} as a consequence
 of Proposition \ref{prop:admKprime}.  
See also \cite{deco-euro}.  
\end{proof}

The condition (i) in Theorem \ref{thm:sufadm}
 is obviously fulfilled if 
$C_K(K')=\{ 0\}$ or if $\operatorname{AS}_K(\pi)=\{ 0 \}$. 
We pin down the meanings of these extremal cases: 

\medskip
\noindent
1) $C_K(K')=\{ 0\} \Leftrightarrow K'=K$.
Then the conclusion in Theorem \ref{thm:sufadm} 2) is 
nothing but Harish-Chandra's admissibility theorem
(see Fact~\ref{fact:HCadm}).

\medskip
\noindent
2) $\operatorname{AS}_K(\pi)=\{ 0\} \Leftrightarrow \dim \pi<\infty$.

\subsection{Algebraic approach}
\label{sec:AV}
For a finitely generated $\mathfrak{g}$-module $X$,
the associated variety 
$\mathcal{V}_{\mathfrak{g}_{\mathbb{C}}}(X)$
 is a subvariety in the nilpotent cone
$\mathcal{N}_{\mathfrak{g}_{\mathbb{C}}}$
of $\mathfrak{g}^*_{\mathbb{C}}$
(see \cite{xvoganass}).
In what follows,
 let $X$ be the underlying $(\mathfrak{g},K)$-module
 of $\pi \in \widehat G$
 and $Y$ the underlying $(\mathfrak{g}',K')$-module
 of $\tau(\in \widehat {G'})$.

We write 
$
\operatorname{pr}_{\mathfrak{g}\to \mathfrak{g}'} :
  \mathfrak{g}_{\mathbb{C}}^* \to 
 (\mathfrak{g}_{\mathbb{C}}^\prime)^*
$
for the natural projection dual to
$ \mathfrak{g}_{\mathbb{C}}^\prime \hookrightarrow
  \mathfrak{g}_{\mathbb{C}}$.
\begin{theorem}[see {\cite[Theorem 3.1]{xkdecoass}}]
 \label{thm:5.2.1}
If \/
$\operatorname{Hom}_{\mathfrak{g}'}(Y,X) \ne \{0\}$, 
then 
  \begin{equation}
    \label{eqn:5.2.1}
    \operatorname{pr}_{\mathfrak{g} \to \mathfrak{g}'}
     (\mathcal{V}_{\mathfrak{g}_{\mathbb{C}}}(X)) \subset
\mathcal{V}_{\mathfrak{g}^\prime_{\mathbb{C}}}(Y).
  \end{equation}
\end{theorem}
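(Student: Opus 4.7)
The plan is to reduce the statement to the $\mathfrak{g}'$-submodule $Z := \phi(Y) \subset X$ attached to a nonzero element $\phi \in \operatorname{Hom}_{\mathfrak{g}'}(Y, X)$. Since $Z$ is a $\mathfrak{g}'$-quotient of $Y$, any good $\mathfrak{g}'$-filtration of $Y$ pushes forward to one on $Z$ whose associated graded is a quotient of $\operatorname{gr} Y$, and hence $\mathcal{V}_{\mathfrak{g}'_\mathbb{C}}(Z) \subset \mathcal{V}_{\mathfrak{g}'_\mathbb{C}}(Y)$. It therefore suffices to establish the sharper containment $\operatorname{pr}_{\mathfrak{g} \to \mathfrak{g}'}(\mathcal{V}_{\mathfrak{g}_\mathbb{C}}(X)) \subset \mathcal{V}_{\mathfrak{g}'_\mathbb{C}}(Z)$. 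The crucial extra input is that $X$ is irreducible as the underlying $(\mathfrak{g},K)$-module of $\pi \in \widehat{G}$, so $U(\mathfrak{g}) \cdot Z = X$; this is what will allow us to control the $\mathfrak{g}$-module $X$ by the $\mathfrak{g}'$-module $Z$.

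Next I would choose finitely many $\mathfrak{g}'$-generators $v_1, \ldots, v_k \in Z$ (available since $Y$, and hence $Z$, is finitely generated) and use them simultaneously as $\mathfrak{g}$-generators of $X$, thereby producing compatible good filtrations
\[
X_n := \sum_{i=1}^k U_n(\mathfrak{g})\, v_i,
\qquad
Z_n := \sum_{i=1}^k U_n(\mathfrak{g}')\, v_i,
\]
for which $Z_n \subset X_n$ and both associated gradeds are generated in degree zero by the principal symbols $[v_1], \ldots, [v_k]$. The inclusion $Z \hookrightarrow X$ then induces a well-defined graded $S(\mathfrak{g}')$-linear map $\operatorname{gr} Z \to \operatorname{gr} X$, whose image $N$ is an $S(\mathfrak{g}')$-submodule of $\operatorname{gr} X$. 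Although this map is typically not injective, $N$ nevertheless contains every $[v_i]$ and hence generates $\operatorname{gr} X$ as an $S(\mathfrak{g})$-module.

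The proof would conclude with a purely commutative-algebraic observation: if $N \subset M$ is an $S(\mathfrak{g}')$-submodule of an $S(\mathfrak{g})$-module $M$ with $S(\mathfrak{g}) \cdot N = M$, then any $p \in S(\mathfrak{g}')$ annihilating $N$ annihilates all of $M$, since $p \cdot (s \cdot n) = s \cdot (p \cdot n) = 0$ for $s \in S(\mathfrak{g})$ and $n \in N$ by commutativity of $S(\mathfrak{g})$. Hence $\operatorname{Ann}_{S(\mathfrak{g}')}(N) \subset \operatorname{Ann}_{S(\mathfrak{g})}(M) \cap S(\mathfrak{g}')$, and passing to zero-loci together with the inclusion $\operatorname{pr}(V(I)) \subset V(I \cap S(\mathfrak{g}'))$ yields $\operatorname{pr}(\operatorname{supp}_{S(\mathfrak{g})} M) \subset \operatorname{supp}_{S(\mathfrak{g}')}(N)$. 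Applied to $M = \operatorname{gr} X$ and $N$ as above, combined with $\operatorname{supp}_{S(\mathfrak{g}')}(N) \subset \mathcal{V}_{\mathfrak{g}'_\mathbb{C}}(Z)$ (as $N$ is an $S(\mathfrak{g}')$-quotient of $\operatorname{gr} Z$), this gives the desired inclusion. The main obstacle to anticipate is precisely the non-injectivity of $\operatorname{gr} Z \to \operatorname{gr} X$ — an element of $Z$ may lie in $X_{n-1}$ without lying in $Z_{n-1}$ — but the argument is designed so that only the surjection $\operatorname{gr} Z \twoheadrightarrow N$ and the $S(\mathfrak{g})$-generation of $\operatorname{gr} X$ by $N$ are needed, both of which survive this failure.
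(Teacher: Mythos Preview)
The paper does not supply its own proof here; it simply records the statement with a citation to \cite[Theorem~3.1]{xkdecoass}. Your argument is correct and follows essentially the same line as in that reference: pass to the image $Z=\phi(Y)\subset X$, use irreducibility of $X$ to obtain $U(\mathfrak g)\cdot Z=X$, build compatible good filtrations on $Z$ and $X$ from a common finite generating set, and then exploit commutativity of $S(\mathfrak g)$ to show that any element of $S(\mathfrak g')$ annihilating the image of $\operatorname{gr}Z$ in $\operatorname{gr}X$ annihilates all of $\operatorname{gr}X$. Your explicit attention to the non-injectivity of $\operatorname{gr}Z\to\operatorname{gr}X$, and your observation that only the surjection onto an $S(\mathfrak g)$-generating submodule is required, is precisely the delicate point.
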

Theorem~\ref{thm:5.2.1}
leads us to
 a useful criterion for discrete
decomposability by means of associated varieties:
\begin{corollary} \label{cor:necdeco} 
If the restriction $X$
is infinitesimally discretely decomposable
 as a ${\mathfrak {g}}'$-module,
 then
$
  \operatorname{pr}_{\mathfrak{g} \to \mathfrak{g}'}
   (\mathcal{V}_{\mathfrak{g}_{\mathbb{C}}}(X))
$
is contained in the nilpotent cone of
$\mathfrak{g}_{\mathbb{C}}^\prime$.
\end{corollary}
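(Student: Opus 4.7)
The corollary should follow from Theorem~\ref{thm:5.2.1} by locating a single irreducible $(\mathfrak{g}', K')$-constituent $Y$ of $X$, trapping the projection inside $\mathcal V_{\mathfrak g'_{\mathbb C}}(Y)$, and then invoking the standard fact that associated varieties of Harish-Chandra modules lie in the nilpotent cone. So the plan has three steps: (a) extract $Y$; (b) apply Theorem~\ref{thm:5.2.1} to $(X,Y)$; (c) conclude by nilpotency of $\mathcal V_{\mathfrak g'_{\mathbb C}}(Y)$.

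For step (a) I would use the hypothesis of infinitesimal discrete decomposability together with the unitarity of $\pi$: the $\mathfrak g$-invariant Hermitian form on $X = \pi_K$ is \emph{a fortiori} $\mathfrak g'$-invariant. By the remark following Definition~\ref{def:algdeco} (i.e.\ Lemma~1.3 of \cite{xkdecoass}), a unitarizable module which is discretely decomposable as a $\mathfrak g'$-module splits as an algebraic direct sum of irreducible $(\mathfrak g', K')$-modules. I would pick any such summand $Y$; the inclusion $Y \hookrightarrow X$ is then a nonzero element of $\operatorname{Hom}_{\mathfrak g'}(Y, X)$.

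Step (b) is now immediate: Theorem~\ref{thm:5.2.1} yields
\[
\operatorname{pr}_{\mathfrak{g} \to \mathfrak{g}'}\bigl(\mathcal{V}_{\mathfrak{g}_{\mathbb C}}(X)\bigr) \subset \mathcal{V}_{\mathfrak{g}'_{\mathbb C}}(Y).
\]
For step (c), since $Y$ is an irreducible Harish-Chandra $(\mathfrak g', K')$-module it has an infinitesimal character and is finitely generated over $U(\mathfrak g')$, so its associated variety is contained in the nilpotent cone $\mathcal N_{\mathfrak g'_{\mathbb C}}$ by Vogan's theorem (cf.\ \cite{xvoganass}). Chaining the two containments gives the corollary.

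The only delicate point, rather than a genuine obstacle, is that the definition of discrete decomposability refers only to $\mathfrak g'$-submodules, whereas both Theorem~\ref{thm:5.2.1} and the nilpotency of associated varieties require $Y$ to be a Harish-Chandra $(\mathfrak g', K')$-module; this is precisely what the unitarity hypothesis on $X$ combined with the cited lemma delivers in step (a).
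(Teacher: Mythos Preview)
Your proof is correct and matches the argument the paper has in mind: the corollary is stated immediately after Theorem~\ref{thm:5.2.1} with no separate proof, precisely because the deduction you spell out---extract an irreducible $(\mathfrak g',K')$-summand $Y$ via Remark~\ref{rem:algdeco}, apply Theorem~\ref{thm:5.2.1}, and use that $\mathcal V_{\mathfrak g'_{\mathbb C}}(Y)\subset\mathcal N_{\mathfrak g'_{\mathbb C}}$---is the intended one. Your care in step~(a) about passing from $\mathfrak g'$-modules to $(\mathfrak g',K')$-modules via unitarity is exactly the point encoded in the paper's standing hypothesis that $X=\pi_K$ with $\pi\in\widehat G$.
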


\begin{remark}\label{rem:AV}
An analogous statement to Theorem \ref{thm:5.2.1} fails if we replace
$\operatorname{Hom}_{\mathfrak{g}'}(Y,X) \ne \{0\}$
by
$\operatorname{Hom}_{G'}(\tau,\pi|_{G'}) \ne \{0\}$.  
\end{remark}

\begin{remark}
\label{rem:AV2}
Analogous results
 to Theorem \ref{thm:5.2.1}
 and Corollary \ref{cor:necdeco} hold
 in the category ${\mathcal {O}}$. 
See \cite{xverma}.  
\end{remark}

It is plausible 
 that the following holds:
\begin{conjecture}\label{conj:AV}
The inclusion \eqref{eqn:5.2.1} in Theorem \ref{thm:5.2.1}
 is equality.
\end{conjecture}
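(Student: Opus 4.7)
The plan is to supplement Theorem~\ref{thm:5.2.1} with the reverse inclusion
\[
  \mathcal{V}_{\mathfrak{g}'_{\mathbb{C}}}(Y) \subset
  \operatorname{pr}_{\mathfrak{g}\to\mathfrak{g}'}
  (\mathcal{V}_{\mathfrak{g}_{\mathbb{C}}}(X)).
\]
Since $Y$ is irreducible, any nonzero element of $\operatorname{Hom}_{\mathfrak{g}'}(Y,X)$ is injective, so I identify $Y$ with its image as a $(\mathfrak{g}',K')$-submodule of $X$. Fix a $K$-stable good filtration $\{X_n\}_{n\ge 0}$ of $X$ and set $Y_n := Y\cap X_n$. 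This is $K'$-stable and compatible with the PBW filtration on $U(\mathfrak{g}')$, and the symbol map yields a graded $S(\mathfrak{g}')$-equivariant injection $\operatorname{gr} Y \hookrightarrow \operatorname{gr} X$, where $S(\mathfrak{g}')$ acts on $\operatorname{gr} X$ through the inclusion $S(\mathfrak{g}') \hookrightarrow S(\mathfrak{g})$.

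The crux is to show that $\{Y_n\}$ is a \emph{good} filtration of $Y$, i.e.\ that $\operatorname{gr} Y$ is finitely generated over $S(\mathfrak{g}')$. Granted this, the associated variety may be computed from any good filtration, so
\[
  \mathcal{V}_{\mathfrak{g}'_{\mathbb{C}}}(Y)
  = \operatorname{Supp}_{S(\mathfrak{g}')}(\operatorname{gr} Y)
  \subset \operatorname{Supp}_{S(\mathfrak{g}')}(\operatorname{gr} X)
  = \overline{\operatorname{pr}_{\mathfrak{g}\to\mathfrak{g}'}
    (\mathcal{V}_{\mathfrak{g}_{\mathbb{C}}}(X))},
\]
the last equality holding because $\operatorname{gr} X$ is finitely generated over $S(\mathfrak{g})$. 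Combined with Theorem~\ref{thm:5.2.1} and the closedness of $\mathcal{V}_{\mathfrak{g}'_{\mathbb{C}}}(Y)$, this forces $\mathcal{V}_{\mathfrak{g}'_{\mathbb{C}}}(Y)=\overline{\operatorname{pr}(\mathcal{V}_{\mathfrak{g}_{\mathbb{C}}}(X))}$. The conjectured equality without the closure amounts to the transversality $\mathcal{V}_{\mathfrak{g}_{\mathbb{C}}}(X) \cap (\mathfrak{g}'_{\mathbb{C}})^{\perp} = \{0\}$, which is the standard properness criterion for linear projections of closed cones.

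The main obstacle is the goodness of $\{Y_n\}$: although $\operatorname{gr} X$ is finitely generated over $S(\mathfrak{g})$, it is typically \emph{not} finitely generated over $S(\mathfrak{g}')$, so the Noetherianity of $S(\mathfrak{g}')$ does not guarantee finite generation of the submodule $\operatorname{gr} Y$ for free. The natural workaround is to compare $\{Y\cap X_n\}$ with the canonical PBW filtration $\widetilde{Y}_n := U(\mathfrak{g}')_n\cdot W$ arising from a finite-dimensional $K'$-stable generator $W\subset Y$ (which exists because $Y$ is an irreducible Harish-Chandra module): choosing $N$ with $W\subset X_N$, one gets the easy inclusion $\widetilde{Y}_n \subset Y_{n+N}$, and the hard step is the reverse $Y_n\subset \widetilde{Y}_{n+N'}$ for some fixed shift $N'$, equivalently the equivalence of the polynomial growth rates of $\dim Y_n$ and $\dim \widetilde{Y}_n$. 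This is where the algebraic substance of $K'$-admissibility (Proposition~\ref{prop:admKprime}, Theorem~\ref{thm:sufadm}) should be invoked: uniform boundedness of the $K'$-multiplicities in $Y$ together with the momentum-map estimates should control $\dim(Y\cap X_n)$ in terms of the branching of $K$-types of $X$, forcing equivalence of the two filtrations up to shift and hence goodness of $\{Y_n\}$.
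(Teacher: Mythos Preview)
This statement is a \emph{conjecture} in the paper, not a theorem: the paper does not prove it in general. What the paper offers is Proposition~\ref{prop:AV}, a list of special settings (compact dual pairs for the oscillator representation, the $O(p,q)$ minimal representation, Verma modules, and $A_{\mathfrak q}(\lambda)$ for symmetric pairs) in which the equality has been verified, with proofs in the cited references by case-by-case computation or $\mathcal D$-module techniques. There is no general argument in the paper to compare your proposal against.

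Your proposal, as you yourself flag, has a genuine gap at the decisive step. Goodness of the induced filtration $Y_n=Y\cap X_n$ is exactly the issue, and your plan to close it is not a proof: you invoke $K'$-admissibility of $\pi|_{K'}$ and the momentum-set estimates of Theorem~\ref{thm:sufadm}, but neither is part of the hypothesis of the conjecture, which only assumes $\operatorname{Hom}_{\mathfrak g'}(Y,X)\neq\{0\}$. Even granting $K'$-admissibility, the passage from multiplicity bounds on $K'$-types to a uniform shift $Y\cap X_n\subset \widetilde Y_{n+N'}$ is asserted rather than argued; controlling $\dim(Y\cap X_n)$ polynomially does not by itself produce the required inclusion of filtrations. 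Finally, your closure-removal step appeals to the transversality $\mathcal V_{\mathfrak g_{\mathbb C}}(X)\cap(\mathfrak g'_{\mathbb C})^{\perp}=\{0\}$, which again is not given: this condition is tantamount to the associated-variety criterion underlying Corollary~\ref{cor:necdeco}, and assuming it begs the question. In short, the sketch identifies the right objects but does not supply the missing ideas; the conjecture remains open in the generality stated.
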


Here are some affirmative results to Conjecture \ref{conj:AV}.
\begin{proposition}\label{prop:AV}
\ 
\begin{itemize}
\item[\upshape{1)}]
$X$ is the Segal--Shale--Weil representation,
and 
$\mathfrak{g}' = \mathfrak{g}'_1 \oplus \mathfrak{g}'_2$
is the compact dual pair in\/
$\mathfrak{g} = \mathfrak{sp}(n,\mathbb{R})$.
\item[\upshape{2)}]
$X$ is the underlying $(\mathfrak{g},K)$-module of the minimal
representation of $O(p,q)$ ($p+q$ even),
and $(\mathfrak{g},\mathfrak{g}')$ is a symmetric pair.
\item[\upshape{3)}]
$X$ is a (generalized) Verma module,
and $(\mathfrak{g},\mathfrak{g}')$ is a symmetric pair.
\item[\upshape{4)}]
$X = A_{\mathfrak{q}}(\lambda)$ and $(\mathfrak{g},\mathfrak{g}')$
 is a symmetric pair. 
\end{itemize}
\end{proposition}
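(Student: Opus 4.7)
The plan is to prove each of the four items by establishing the reverse inclusion $\mathcal{V}_{\mathfrak{g}'_{\mathbb{C}}}(Y) \subset \operatorname{pr}_{\mathfrak{g}\to\mathfrak{g}'}(\mathcal{V}_{\mathfrak{g}_{\mathbb{C}}}(X))$; the forward inclusion is the content of Theorem \ref{thm:5.2.1}. The unifying idea is that in every listed case $\mathcal{V}_{\mathfrak{g}_{\mathbb{C}}}(X)$ is an explicitly describable orbit closure, and the composition factors $Y$ of $X|_{\mathfrak{g}'}$ (in the appropriate category) have associated varieties that can be read off from a combinatorial branching rule. Throughout, I would identify $\mathfrak{g}^*_{\mathbb{C}}\simeq\mathfrak{g}_{\mathbb{C}}$ via a non-degenerate invariant form, so that $\operatorname{pr}_{\mathfrak{g}\to\mathfrak{g}'}$ becomes the orthogonal projection along $(\mathfrak{g}'_{\mathbb{C}})^{\sigma=-1}$. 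Under this normalization, the Kostant--Rallis picture places $\mathcal{V}_{\mathfrak{g}_{\mathbb{C}}}(X)\subset\mathfrak{p}_{\mathbb{C}}\cap\mathcal{N}_{\mathfrak{g}_{\mathbb{C}}}$, and the essential computation is reduced to a projection between Cartan $\mathfrak{p}$-spaces.

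For case (1), Howe duality gives the multiplicity-free decomposition $X\simeq\bigoplus_\sigma \sigma\boxtimes\theta(\sigma)$ under a compact dual pair $(\mathfrak{g}'_1,\mathfrak{g}'_2)$; the associated variety of $X$ is a Lagrangian cone in the Fock model, and that of each theta lift $\theta(\sigma)$ is the closure of one $K'_{2,\mathbb{C}}$-orbit whose rank stratum is dictated by $\sigma$. The orbit matching that proves $\supset$ amounts to a direct moment-map computation (of the type carried out by Yamashita, Nishiyama, Przebinda). Case (2) is reduced to an analogous direct inspection using the explicit branching law of the minimal representation of $O(p,q)$ established in \cite{xkor:2} for every symmetric pair $(\mathfrak{g},\mathfrak{g}')$: the composition factors have associated varieties equal to the closures of the minimal nilpotent orbits of $\mathfrak{g}'_{\mathbb{C}}$ (or zero), and the image of the minimal orbit of $\mathfrak{g}_{\mathbb{C}}$ under $\operatorname{pr}_{\mathfrak{g}\to\mathfrak{g}'}$ is precisely this union.

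For case (3), generalized Verma modules $X=M_{\mathfrak{p}}(\lambda)=U(\mathfrak{g})\otimes_{U(\mathfrak{p})}F_\lambda$ have $\mathcal{V}_{\mathfrak{g}_{\mathbb{C}}}(X)=\overline{\operatorname{Ad}(G_{\mathbb{C}})\mathfrak{u}^-}$ expressed directly in terms of $\mathfrak{u}^-$. The category-$\mathcal{O}$ analog of Theorem \ref{thm:5.2.1} noted in Remark \ref{rem:AV2} and established in \cite{xverma} identifies every irreducible $\mathfrak{g}'$-constituent $Y$ as a subquotient of a generalized Verma module for $\mathfrak{p}':=\mathfrak{p}\cap\mathfrak{g}'_{\mathbb{C}}$, and equality of associated varieties collapses to the linear-algebra identity $\operatorname{pr}_{\mathfrak{g}\to\mathfrak{g}'}\mathfrak{u}^-=\mathfrak{u}'^-$, which follows from the $\sigma$-stability of $\mathfrak{p}$. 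Case (4) is the analog with Vogan's formula $\mathcal{V}_{\mathfrak{g}_{\mathbb{C}}}(A_{\mathfrak{q}}(\lambda))=\overline{K_{\mathbb{C}}\cdot(\mathfrak{u}\cap\mathfrak{p}_{\mathbb{C}})}$: when the restriction is infinitesimally discretely decomposable, the results of \cite{xkdecomp} together with Corollary \ref{cor:necdeco} identify the constituents as $A_{\mathfrak{q}'}(\lambda')$-modules for $\mathfrak{q}'=\mathfrak{q}\cap\mathfrak{g}'_{\mathbb{C}}$, and the problem reduces to verifying
\[
\operatorname{pr}_{\mathfrak{g}\to\mathfrak{g}'}\overline{K_{\mathbb{C}}\cdot(\mathfrak{u}\cap\mathfrak{p}_{\mathbb{C}})}
 = \overline{K'_{\mathbb{C}}\cdot(\mathfrak{u}'\cap\mathfrak{p}'_{\mathbb{C}})},
\]
an orbit identity that follows from the $\sigma$-stability of $\mathfrak{q}$ and the surjectivity $\mathfrak{u}\cap\mathfrak{p}_{\mathbb{C}}\twoheadrightarrow\mathfrak{u}'\cap\mathfrak{p}'_{\mathbb{C}}$ of the projection on the Cartan complement.

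The main obstacle is case (4) outside the discretely decomposable regime, where the constituents $Y$ need not be of $A_{\mathfrak{q}'}$ type and $\mathcal{V}(Y)$ is no longer pinned down by structural parameters. Here I would attempt to deform $\lambda$ into the good range via translation functors and exploit the upper semicontinuity of associated varieties, together with an induction along the closure order of $K_{\mathbb{C}}$-orbits on $\mathfrak{p}_{\mathbb{C}}\cap\mathcal{N}_{\mathfrak{g}_{\mathbb{C}}}$, to propagate the equality from extremal boundary cases (e.g.\ holomorphic discrete series, where the orbit identity is classical) to the general $A_{\mathfrak{q}}(\lambda)$.
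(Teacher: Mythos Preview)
For cases (1)--(3) your outline matches the paper's own proof, which simply defers to the literature: case (1) to the associated-cycle computations in \cite{E-W} and \cite{NOT}, case (2) to \cite{xkor:2}, and case (3) to \cite{xverma}. Your sketches track those references.

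The gap is in case (4). First, your ``main obstacle'' is not an obstacle at all: by Theorem~\ref{thm:decosymm} (the equivalence (i)$\Leftrightarrow$(iv)), if $A_{\mathfrak q}(\lambda)$ is \emph{not} infinitesimally discretely decomposable over $\mathfrak g'$ then $\operatorname{pr}_{\mathfrak g\to\mathfrak g'}\bigl(\mathcal V_{\mathfrak g_{\mathbb C}}(A_{\mathfrak q}(\lambda))\bigr)$ is not contained in the nilpotent cone of $(\mathfrak g'_{\mathbb C})^*$, and then Theorem~\ref{thm:5.2.1} forces $\operatorname{Hom}_{\mathfrak g'}(Y,X)=0$ for every irreducible $Y$. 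Conjecture~\ref{conj:AV} is therefore vacuous outside the discretely decomposable regime, and the deformation/translation-functor program you propose is unnecessary.

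The genuine difficulty lies in the discretely decomposable case, and here your argument rests on assertions that the cited references do not supply. Neither \cite{xkdecomp} nor Corollary~\ref{cor:necdeco} shows that every irreducible $\mathfrak g'$-constituent $Y$ is of the form $A_{\mathfrak q'}(\lambda')$ with $\mathfrak q'=\mathfrak q\cap\mathfrak g'_{\mathbb C}$; Corollary~\ref{cor:necdeco} only places the projection inside the nilpotent cone, and the explicit branching formulas in \cite{xkdecomp} treat specific families, not the general symmetric pair. You also invoke the $\sigma$-stability of $\mathfrak q$, which is not part of the hypothesis and is not automatic up to $K$-conjugacy. Even granting both points, the displayed orbit equality does not follow from the linear surjection $\mathfrak u\cap\mathfrak p_{\mathbb C}\twoheadrightarrow\mathfrak u'\cap\mathfrak p'_{\mathbb C}$ alone: the image of $\overline{K_{\mathbb C}\cdot(\mathfrak u\cap\mathfrak p_{\mathbb C})}$ under a linear projection need not be closed, nor need it coincide with the $K'_{\mathbb C}$-saturation of $\mathfrak u'\cap\mathfrak p'_{\mathbb C}$. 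The paper takes a completely different route for (4), attributing it to a recent $\mathcal D$-module argument of Y.~Oshima which bypasses the need to identify the constituents $Y$ representation-theoretically.
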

\begin{proof}
The first statement could be read off
 from the results in \cite{E-W,NOT} by case-by-case argument
 though they were not formulated by means of Theorem \ref{thm:5.2.1}.
See \cite{xkor:2} for the proof of the second,
 and \cite{xverma} for that of the third statement, respectively.  
The fourth statement is proved recently by Y.~Oshima
 by using a ${\mathcal {D}}$-module argument.  
\end{proof}

\subsection
{Restriction of $A_{\mathfrak {q}}(\lambda)$ to symmetric pair}
\label{sec:symmCone}

For the restriction of $A_{\mathfrak q}(\lambda)$
 to a reductive symmetric pair, 
our criterion is computable.
Let us have a closer look.

Suppose that $(G,G')$ is a symmetric pair defined by an involutive
automorphism $\sigma$ of $G$.
As usual, the differential of $\sigma$ will be denoted
by the same letter.
By taking a conjugation by $G$ if necessary, 
we may and do assume
 that $\sigma$ stabilizes $K$
 and that $\mathfrak{t}$
and $\Delta^+(\mathfrak{k},\mathfrak{t})$ are chosen so that 
\begin{itemize}
\item[1)] 
$\mathfrak{t}^{-\sigma}:= \mathfrak{t}\cap \mathfrak{k}^{-\sigma}$ 
is a maximal 
abelian subspace of $\mathfrak{k}^{-\sigma}$,

\item[2)] 
$\sum^+(\mathfrak{k},\mathfrak{t}^{-\sigma})
:=\{ \lambda|_{\mathfrak{t}^{-\sigma}}:
\lambda\in\Delta^+(\mathfrak{k},\mathfrak{t})\} \setminus \{0\}$
is a positive system of the restricted root system 
$\sum(\mathfrak{k},\mathfrak{t}^{-\sigma})$.
\end{itemize}
Then the momentum set $C_K(K')$ coincides with the dominant Weyl chamber 
 $(\subset\sqrt{-1}(\mathfrak{t}^{-\sigma})^*)$ 
 with respect to 
$\Sigma^+({\mathfrak {k}}, {\mathfrak {t}}^{-\sigma})$.  

Let
$\Delta(\mathfrak{u}\cap\mathfrak{p})\subset\sqrt{-1}\mathfrak{t}^*$ 
be the set of weights in $\mathfrak{u}\cap\mathfrak{p}$, 
 and ${\mathbb{R}}_+\Delta({\mathfrak {u}}\cap {\mathfrak {p}})$
the closed cone spanned by $\Delta({\mathfrak {u}} \cap {\mathfrak {p}})$.  
Then the asymptotic support $\operatorname{AS}_K(A_{\mathfrak q}(\lambda))$
 is contained in ${\mathbb{R}}_+\Delta({\mathfrak {u}}\cap {\mathfrak {p}})$.
\begin{theorem}\label{thm:decosymm}
The following six conditions
 on $({\mathfrak {g}},{\mathfrak {g}}^{\sigma},\mathfrak{q})$
 are equivalent: 
\begin{itemize}
\item[\upshape(i)]
$A_{\mathfrak{q}}(\lambda)$ is non-zero and discretely decomposable
as a $\mathfrak{g}'$-module
for some $\lambda$ in the weakly fair range.
\item[\upshape(i)$'$]
$A_{\mathfrak{q}}(\lambda)$ is discretely decomposable 
as a $\mathfrak{g}'$-module
for any
$\lambda$ in the weakly fair range.
\item[\upshape(ii)]
$\mathbb{R}_+
 \Delta(\mathfrak{u}\cap\mathfrak{p})\cap\sqrt{-1}\mathfrak{t}^{-\sigma}
 = \{0\}$.
\item[\upshape(iii)]
$A_{\mathfrak{q}}(\lambda)$ is non-zero and $K'$-admissible for some
$\lambda$ in the weakly fair range.
\item[\upshape(iii)$'$]
$A_{\mathfrak{q}}(\lambda)$ is $K'$-admissible for any
$\lambda$ in the weakly fair range.
\item[\upshape(iv)]
$\operatorname{pr}_{{\mathfrak {g}} \to {\mathfrak {g}}'}
({\mathcal{V}}_{\mathfrak {g}_{\mathbb{C}}}
  (A_{\mathfrak {q}}(\lambda)))$
 is contained
 in the nilpotent cone of ${\mathfrak {g}}_{\mathbb{C}}'$.  
\end{itemize}
\end{theorem}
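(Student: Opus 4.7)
The plan is to close the cycle (ii) $\Leftrightarrow$ (iii)$'$ $\Rightarrow$ (i)$'$ $\Rightarrow$ (i) $\Rightarrow$ (iv) $\Rightarrow$ (ii), and separately to deduce (iii) $\Leftrightarrow$ (iii)$'$ from the manifest $\lambda$-independence of (ii). The links (iii)$'$ $\Rightarrow$ (i)$'$, (i)$'$ $\Rightarrow$ (i), and (i) $\Rightarrow$ (iv) follow immediately from Proposition \ref{prop:admKprime}, tautology, and Corollary \ref{cor:necdeco}, respectively. The real content lies in (ii) $\Leftrightarrow$ (iii)$'$ and (iv) $\Rightarrow$ (ii).

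For (ii) $\Leftrightarrow$ (iii)$'$ I would apply the $K'$-admissibility criterion of Theorem \ref{thm:sufadm} to $\pi = \overline{A_{\mathfrak{q}}(\lambda)}$. Two explicit computations are required. First, with the normalization of $\mathfrak{t}$, $\mathfrak{t}^{-\sigma}$, and positive systems fixed in the statement, the momentum set $C_K(K')$ reduces to the dominant Weyl chamber inside $\sqrt{-1}(\mathfrak{t}^{-\sigma})^*$. Second, by the Blattner formula --- or, equivalently, the Dolbeault realization of $A_{\mathfrak{q}}(\lambda)$ on the elliptic orbit $G/L$ --- one obtains
\[
\operatorname{AS}_K(A_{\mathfrak{q}}(\lambda)) = \mathbb{R}_+\Delta(\mathfrak{u}\cap\mathfrak{p}) \cap C_+
\]
whenever $A_{\mathfrak{q}}(\lambda) \neq 0$ in the weakly fair range; in particular this cone is independent of $\lambda$. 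Feeding both computations into the criterion $C_K(K') \cap \operatorname{AS}_K(\pi) = \{0\}$ of Theorem \ref{thm:sufadm}(i) reproduces exactly (ii), hence (ii) $\Leftrightarrow$ (iii)$'$. The $\lambda$-independence of (ii) then forces (iii) $\Leftrightarrow$ (iii)$'$.

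The heart of the theorem is (iv) $\Rightarrow$ (ii). For this step I would invoke Proposition \ref{prop:AV}(4), which furnishes the equality
\[
\mathcal{V}_{\mathfrak{g}_{\mathbb{C}}}(A_{\mathfrak{q}}(\lambda)) = \operatorname{Ad}(K_{\mathbb{C}}) \cdot (\mathfrak{u}\cap\mathfrak{p}_{\mathbb{C}}),
\]
and use the Killing form to identify $\operatorname{pr}_{\mathfrak{g}\to\mathfrak{g}'}$ with the $\sigma$-symmetrization $Z \mapsto \tfrac{1}{2}(Z + \sigma Z)$ onto $\mathfrak{g}^\sigma_{\mathbb{C}}$. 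Arguing by contraposition, take a nonzero $h \in \mathbb{R}_+\Delta(\mathfrak{u}\cap\mathfrak{p}) \cap \sqrt{-1}\mathfrak{t}^{-\sigma}$ and, via a Jacobson--Morozov / Kostant--Rallis construction, produce an element $Z \in \operatorname{Ad}(K_{\mathbb{C}})(\mathfrak{u}\cap\mathfrak{p}_{\mathbb{C}})$ whose $\sigma$-symmetrization carries a nonzero semisimple component in $\mathfrak{g}'_{\mathbb{C}}$, contradicting (iv). The main obstacle is precisely this construction: the cone condition (ii) lives on the compact Cartan side in $\sqrt{-1}\mathfrak{t}^*$, while (iv) lives on the $\mathfrak{p}$-side in $\mathfrak{g}^*_{\mathbb{C}}$, and bridging them requires Kostant--Rallis structure theory for the pair $(\mathfrak{g}, \mathfrak{g}^\sigma)$ together with a careful tracking of the $\sigma$-action on $K_{\mathbb{C}}$-orbits in $\mathfrak{u}\cap\mathfrak{p}_{\mathbb{C}}$ to ensure that the constructed non-nilpotent image genuinely lies in the projected associated variety. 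An appealing alternative, which I would try in parallel, is a direct moment-map comparison: since both $C_K(K')$ and $\operatorname{pr}_{\mathfrak{g}\to\mathfrak{g}'}(\mathcal{V}(A_{\mathfrak{q}}(\lambda)))$ arise as images of natural moment maps attached to the pair $(G,G')$ and the orbit $G/L$, the equivalence (ii) $\Leftrightarrow$ (iv) should follow from a compatibility of these two moment pictures, bypassing the explicit construction of a semisimple witness.
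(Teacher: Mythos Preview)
Your logical skeleton matches the paper's: Theorem~\ref{thm:sufadm} together with the inclusion $\operatorname{AS}_K(A_{\mathfrak{q}}(\lambda)) \subset \mathbb{R}_+\Delta(\mathfrak{u}\cap\mathfrak{p})$ gives (ii) $\Rightarrow$ (iii)$'$, then Proposition~\ref{prop:admKprime} gives (iii)$'$ $\Rightarrow$ (i)$'$, and Corollary~\ref{cor:necdeco} gives (i) $\Rightarrow$ (iv). The paper treats (i) $\Leftrightarrow$ (i)$'$ and (iii) $\Leftrightarrow$ (iii)$'$ as easy and defers the remaining implications---in particular the closure of the loop back to (ii)---to \cite{xkrons}, citing Theorem~\ref{thm:5.2.1} as the underlying tool. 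So your outline and the paper's coincide on the easy arcs and both leave the hard arc to outside work.

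Two inaccuracies are worth flagging. First, you assert the \emph{equality} $\operatorname{AS}_K(A_{\mathfrak{q}}(\lambda)) = \mathbb{R}_+\Delta(\mathfrak{u}\cap\mathfrak{p}) \cap C_+$; the paper states and uses only the inclusion, and the equality is delicate in the weakly fair range. This does no damage, since your cycle needs only (ii) $\Rightarrow$ (iii)$'$, which the inclusion already yields; the direction (iii)$'$ $\Rightarrow$ (ii) then comes for free from the cycle via (iv). Second, Proposition~\ref{prop:AV}(4) does \emph{not} give the formula $\mathcal{V}_{\mathfrak{g}_{\mathbb{C}}}(A_{\mathfrak{q}}(\lambda)) = \overline{\operatorname{Ad}(K_{\mathbb{C}})(\mathfrak{u}\cap\mathfrak{p}_{\mathbb{C}})}$---that is a separate standard fact about Zuckerman modules. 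Proposition~\ref{prop:AV}(4) is the assertion that the inclusion \eqref{eqn:5.2.1} of Theorem~\ref{thm:5.2.1} is an equality, i.e.\ that $\operatorname{pr}_{\mathfrak{g}\to\mathfrak{g}'}(\mathcal{V}_{\mathfrak{g}_{\mathbb{C}}}(X)) = \mathcal{V}_{\mathfrak{g}'_{\mathbb{C}}}(Y)$ for irreducible $\mathfrak{g}'$-constituents $Y$; this is a different and deeper statement, and it is not what you need for (iv) $\Rightarrow$ (ii).

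On (iv) $\Rightarrow$ (ii) itself: your contrapositive plan---from a nonzero $h \in \mathbb{R}_+\Delta(\mathfrak{u}\cap\mathfrak{p}) \cap \sqrt{-1}(\mathfrak{t}^{-\sigma})^*$ build $Z \in \operatorname{Ad}(K_{\mathbb{C}})(\mathfrak{u}\cap\mathfrak{p}_{\mathbb{C}})$ with non-nilpotent $\sigma$-symmetrization---is pointed in the right direction, but as you yourself note, the construction of that witness is the entire content and is not carried out. The paper does not attempt it either, simply citing \cite{xkrons}; so on this step you and the paper are equally schematic, though your sketch is more explicit about where the difficulty lies.
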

\begin{proof}
The equivalences (i) $\Leftrightarrow$ (i)$'$
 and (iii) $\Leftrightarrow$ (iii)$'$
 are easy.  
The implication (ii) $\Rightarrow$ (iii) 
 was first proved in \cite{xkdecomp}.  
Alternatively, we can use Theorem \ref{thm:sufadm}
 and the inclusive relation
 $\operatorname{AS}_K(A_{\mathfrak q}(\lambda))
\subset {\mathbb{R}}_+\Delta({\mathfrak {u}}\cap {\mathfrak {p}})$.
This was the approach taken in \cite{xkdecoalg}.  
Other implications are proved in \cite{xkrons}
 based on Theorem \ref{thm:5.2.1}.  
\end{proof}
See \cite{classAq} for the list of all such triples 
$({\mathfrak {g}}, {\mathfrak {g}}^{\sigma}, {\mathfrak {q}})$.  

\begin{remark}
\label{rem:decosymm}
The implication (i) $\Rightarrow$ (iii)$'$
 and Theorem \ref{thm:sufadm} show that 
\[
  \dim \operatorname{Hom}_{G'} (\tau, \pi|_{G'}) <\infty
 \quad
 \text{for any } 
 \tau \in \widehat {G'}
\]
 if the restriction $\pi|_{G'}$ is infinitesimally discretely 
 decomposable for any $\pi_K \simeq A_{\mathfrak {q}}(\lambda)$.  
\end{remark}

\section{Appendix -- basic properties of $A_{\mathfrak {q}}(\lambda)$}
\label{sec:appendix}

This section gives a quick summary of basic properties on
 Zuckeman's derived functor modules
 and the ``geometric quantization'' of elliptic coadjoint orbits
$\mathcal{O}_\lambda$ in the following scheme:
\[
\begin{array}{cl}
\lambda\in\sqrt{-1}\mathfrak{g}^*&\text{an elliptic and integral element}
\\
\yato&\\
\mathcal{L}_{\lambda+\rho_{\lambda}}\to \mathcal{O}_\lambda
&\text{a $G$-equivariant holomorphic line bundle}\\
\yato&\\
H^*_{\bar \partial}(\mathcal{O}_\lambda,\mathcal{L}_{\lambda+\rho_{\lambda}})
&\text{a Fr\'{e}chet representation of $G$}\\
\yato&\\
\pi_\lambda
&\text{a unitary representation of $G$}
\end{array}
\]

There is no new result in this section,
 and the normalization of the parameters
 and formulation 
 follows the expository notes
 \cite{xkrons, deco-euro}.  
See \cite{xknappv} for a more complete treatment and references therein.

\subsection{Zuckerman derived functor modules}
\label{subsec:ZDF} 
Let $G$ be a connected real reductive Lie group, 
 ${\mathfrak {g}}={\mathfrak {k}}+{\mathfrak {p}}$
 a Cartan decomposition
 of the Lie algebra of ${\mathfrak {g}}$, 
and $\theta$ the corresponding Cartan involution.  

Let $\mathfrak{q}$ be a $\theta$-stable parabolic subalgebra of
$\mathfrak{g}_{\mathbb{C}}$. 
Then the normalizer $L=N_G(\mathfrak{q})$ is a connected reductive
subgroup of $G$,
and the homogeneous space $G/L$ carries a $G$-invariant complex
structure such that the holomorphic tangent bundle
$T(G/L)$ is given as a homogeneous bundle 
$G\times_L(\mathfrak{g}_{\mathbb{C}}/\mathfrak{q})$.
Let $\mathfrak {l}_{\mathbb{C}}$ be the complexification
 of the Lie algebra ${\mathfrak {l}}$ of $L$, 
 and ${\mathfrak {u}}$ the unipotent radical of ${\mathfrak {q}}$.  
Then we have a Levi decomposition 
$\mathfrak{q}=\mathfrak{l}_{\mathbb{C}}+\mathfrak{u}$.  
We set 
$
\rho(\mathfrak{u})(X):= \frac{1}{2} \operatorname{Trace}
   (\operatorname{ad}(X): \mathfrak{u} \to \mathfrak{u})
$
 for $X \in {\mathfrak {l}}$.

We say a Lie algebra homomorphism 
 $\lambda: \mathfrak{l}\to \mathbb{C}$ is \textit{integral} if
$\lambda$ lifts to a character of the connected group $L$,
denoted by $\mathbb{C}_\lambda$.
Then 
$\mathcal{L}_\lambda:=G\times_{G_\lambda}\mathbb{C}_\lambda$
is a $G$-equivariant holomorphic line bundle over $G/L$.
For example, 
 $2 \rho({\mathfrak {u}})$ is integral, 
 and the canonical bundle
$\Omega(G/L) := \Lambda^{top}(T^*(G/L))$
is isomorphic to 
\begin{equation}\label{eqn:OmegaGL}
\Omega(G/L) \simeq {\mathcal {L}}_{2\rho(\mathfrak{u})}
\end{equation}
as a $G$-equivariant holomorphic line bundle.
The Zuckerman derived functor 
$W 
\mapsto
{\mathcal {R}}_{\mathfrak{q}}^j
(W \otimes {\mathbb{C}}_{\rho({\mathfrak {u}})})
$
is a covariant functor 
from the category of $({\mathfrak {l}}, L \cap K)$-modules
 to the category of $({\mathfrak {g}}, K)$-modules.  
We note
 that $L$ is not necessarily compact.  
In this generality,
 H. Wong proved in \cite{xwong}
that the Dolbeault cohomology groups
\[
H_{\bar{\partial}}^j(G/L,{\mathcal{L}}_{\lambda}\otimes \Omega(G/L))
\simeq 
H_{\bar{\partial}}^j(G/L,{\mathcal{L}}_{\lambda+2\rho(\mathfrak{u})})
\]
carry a Fr\'echet topology
 on which $G$ acts continuously
 and that 
$
  {\mathcal{R}}_{\mathfrak {q}}^j({\mathbb{C}}_{\lambda+\rho({\mathfrak {u}})})
$
 are isomorphic to their underlying $(\mathfrak{g},K)$-modules.  
We set
$S:=\dim_{\mathbb{C}}(\mathfrak{u}\cap\mathfrak{k}_{\mathbb{C}})$,
and
\[
A_{\mathfrak{q}}(\lambda) :=
\mathcal{R}_{\mathfrak{q}}^S(\mathbb{C}_{\lambda+\rho(\mathfrak{u})}).
\]
In our normalization,
 $A_{\mathfrak {q}}(0)$ is an irreducible
 and unitarizable $({\mathfrak {g}},K)$-module
 with non-zero $({\mathfrak {g}},K)$-cohomology \cite{xvoza},
 and in particular,
 has the same infinitesimal character with that of the trivial one
 dimensional representation ${\mathbb{C}}$ of $G$.  

\subsection{Geometric quantization of elliptic coadjoint orbit}
\label{sub:7.1.3}

Let $\lambda\in \sqrt{-1}\mathfrak{g}^*$.  
We say that the coadjoint orbit
$
\mathcal{O}_\lambda:=\operatorname{Ad}^*(G)\cdot\lambda
$
is {\it{elliptic}} 
if $\lambda|_{\mathfrak {p}}\equiv 0$.  
We identify ${\mathfrak {g}}$ 
 with the dual space ${\mathfrak {g}}^*$
 by a non-degenerate $G$-invariant bilinear form, 
 and write $X_{\lambda} \in \sqrt{-1} {\mathfrak {g}}$
 for the corresponding element to $\lambda$.
Then $\operatorname{ad}(X_{\lambda})$
 is semisimple
 and all the eigenvalues are pure imaginary.  
The sum of the eigenspaces
 for non-negative eigenvalues
 of $-\sqrt{-1} \operatorname{ad}(X_{\lambda})$
 defines a $\theta$-stable parabolic subalgebra 
 ${\mathfrak {q}}={\mathfrak {l}}_{\mathbb{C}}+ {\mathfrak {u}}$,
 and consequently, 
 the elliptic orbit $\mathcal{O}_\lambda$
carries a $G$-invariant complex structure such that the holomorphic
tangent bundle is given
 by $G \times_L ({\mathfrak {g}}_{\mathbb{C}}/{\mathfrak {q}})$.  

We set $\rho_{\lambda}:= \rho({\mathfrak {u}})$.
If $\lambda + \rho_{\lambda}$ is integral, 
 namely,
 if $\lambda + \rho_{\lambda}$ lifts to a character of $L$, 
 then we can define a $G$-equivariant holomorphic line bundle
$
\mathcal{L}_{\lambda+ \rho_{\lambda}}:=G\times_{L}
\mathbb{C}_{\lambda+\rho_\lambda}
$
over 
$\mathcal{O}_\lambda$.  

Here is a brief summary
 of the important achievements
 on unitary representation theory in 1980s
 and 1990s
 on the geometric quantization
 of elliptic orbits 
 due to Parthasarathy, Zuckerman, Vogan and Wallach
 (algebraic construction, unitarizability
 of Zuckerman derived
functor modules $A_{\mathfrak{q}}(\lambda)$), 
 and Schmid and Wong
 (realization in Dolbeault cohomology, 
 in particular, 
 the closed range property
 of the $\bar \partial$-operator) among others.
See \cite{xknappv, deco-euro} for the original references therein.  
\begin{fact}\label{fact:ZDF}
Let $\lambda\in\sqrt{-1}\mathfrak{g}^*$ be elliptic 
such that $\lambda+ \rho_{\lambda}$ is integral.
\begin{itemize}
\item[\upshape 1)] (vanishing theorem) 
$H^j_{\bar \partial}(\mathcal{O}_\lambda,\mathcal{L}_{\lambda+\rho_{\lambda}})=0$
if $j\not= S$.

\item[\upshape 2)] 
The Dolbeault cohomology group 
$H^S_{\bar \partial}(\mathcal{O}_\lambda,\mathcal{L}_{\lambda+\rho_{\lambda}})$
carries a Fr\'echet topology, on which $G$ acts
continuously.
It is the maximal globalization of
${\mathcal{R}}_{\mathfrak{q}}^S({\mathbb{C}}_\lambda)
=A_{\mathfrak {q}}(\lambda-\rho_{\lambda})$ 
in the sense of Schmid.

\item[\upshape 3)] (unitarizability) 
There is a dense subspace 
$\mathcal{H}$ in $H^S_{\bar \partial}
(\mathcal{O}_\lambda,\mathcal{L}_{\lambda+\rho_{\lambda}})$ on which a 
$G$-invariant Hilbert structure exists.
We denote by $\pi_\lambda$ the resulting unitary representation
on ${\mathcal {H}}$.

\item[\upshape 4)] 
If $\lambda$ is in the good range
 in the sense of Vogan,
 then the unitary representation of $G$ on $\mathcal{H}$
is irreducible and non-zero.
\end{itemize}
\end{fact}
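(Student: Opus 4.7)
The plan is to treat the four assertions in order, relying on two complementary threads of the theory: the algebraic construction of $A_{\mathfrak{q}}(\mu)$ via the Zuckerman derived functor ${\mathcal{R}}_{\mathfrak{q}}^j$ (together with the Vogan--Zuckerman classification and unitarisability results), and the geometric realisation of these modules on Dolbeault cohomology of $\mathcal{L}_{\lambda+\rho_\lambda}\to{\mathcal{O}}_\lambda$ due to Schmid and Wong. Throughout, the shifted parameter $\mu=\lambda-\rho_\lambda$ is chosen so that ${\mathcal{R}}_{\mathfrak{q}}^S({\mathbb{C}}_\lambda)=A_{\mathfrak{q}}(\mu)$ under the normalisation of Subsection~\ref{subsec:ZDF}.

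For (1) and (2), I would first establish the purely algebraic vanishing ${\mathcal{R}}_{\mathfrak{q}}^j({\mathbb{C}}_\lambda)=0$ for $j\ne S$ by the standard Koszul-complex computation on the $({\mathfrak{q}},L\cap K)$-side, and then transport the statement to the analytic side via Wong's theorem that the $\bar\partial$-operator has closed range on smooth $(0,q)$-forms valued in $\mathcal{L}_{\lambda+\rho_\lambda}$; this endows each $H^j_{\bar\partial}({\mathcal{O}}_\lambda,\mathcal{L}_{\lambda+\rho_\lambda})$ with a Fréchet topology on which $G$ acts continuously. Schmid's theorem identifies the underlying $({\mathfrak{g}},K)$-module with ${\mathcal{R}}_{\mathfrak{q}}^j({\mathbb{C}}_\lambda)$, so the vanishing at $j\ne S$ transfers immediately, and the maximal-globalisation characterisation (Kashiwara--Schmid) identifies the Fréchet space at $j=S$ with the maximal globalisation of $A_{\mathfrak{q}}(\mu)$.

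For (3), I would construct a $G$-invariant positive definite Hermitian form on the Harish-Chandra module $A_{\mathfrak{q}}(\mu)$ itself and then complete: this is the Vogan--Wallach programme. The starting point is the Shapovalov-type form produced by cohomological induction; positivity on the bottom layer of $K$-types is secured by Parthasarathy's Dirac operator inequality, after which positivity propagates through the full $U({\mathfrak{g}})$-action by explicit signature-character calculations of Vogan--Wallach. Completing $A_{\mathfrak{q}}(\mu)$ with respect to this form yields the Hilbert space ${\mathcal{H}}$ and the unitary representation $\pi_\lambda$, which sits densely inside the Dolbeault Fréchet space constructed in step~(2).

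For (4), non-vanishing and irreducibility in the good range follow from Vogan's cohomological induction theorem: the good-range hypothesis on $\lambda$ makes the infinitesimal character dominant and regular, so Jantzen--Zuckerman translation functors reduce the situation to the trivial coefficient case $\mathbb{C}_0$, where $A_{\mathfrak{q}}(0)$ is classically known to be non-zero and irreducible with the same infinitesimal character as the trivial representation. The principal obstacle in the whole scheme is \emph{step~(3)}: when $L$ is non-compact the unitarisability assertion genuinely requires the combined force of Parthasarathy's Dirac inequality and the Vogan--Wallach signature analysis, and is not formal. By contrast, (1), (2) and (4) amount essentially to packaging the Schmid--Wong geometric machinery with the translation principle for Zuckerman functors.
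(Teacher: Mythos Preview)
The paper does not supply its own proof of this statement: it is recorded as a \emph{Fact}, preceded by the sentence ``Here is a brief summary of the important achievements \ldots\ due to Parthasarathy, Zuckerman, Vogan and Wallach (algebraic construction, unitarizability \ldots) and Schmid and Wong (realization in Dolbeault cohomology, in particular, the closed range property of the $\bar\partial$-operator),'' with a blanket reference to \cite{xknappv,deco-euro} for the original sources. So there is no in-paper argument to compare against; your outline is precisely a sketch of the literature proofs to which the paper is pointing, and the attributions you give (Wong for closed range and the Fr\'echet topology, Schmid for maximal globalisation, Vogan--Wallach for unitarizability, Vogan/translation principle for irreducibility in the good range) line up with the paper's own list.

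One small historical correction on part~(3): Parthasarathy's Dirac inequality is a \emph{necessary} condition for unitarity and is not the engine of the sufficiency proof for general $A_{\mathfrak{q}}(\mu)$. Vogan's 1984 argument establishes positive-definiteness of the invariant Hermitian form by a continuity/deformation argument in the parameter $\lambda$ (the form is nondegenerate throughout the weakly good range by irreducibility, hence its signature cannot jump), and Wallach gave an independent proof via tensoring and filtration arguments. Your description of ``positivity on the bottom layer via Dirac inequality, then propagation'' conflates this with the earlier Parthasarathy--Enright approach that works only in more restricted settings. This does not affect the logical soundness of the plan, only the attribution of the key step.
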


Here,
 by \lq{good range}\rq, 
we mean that $\lambda$ satisfies
\begin{equation}
\label{eqn:good}
  \langle \lambda+ \rho_{\mathfrak {l}}, \alpha\rangle>0
  \quad
  \text{for any }\quad
  \alpha \in \Delta({\mathfrak {u}}, {\mathfrak {h}}_{\mathbb{C}}), 
\end{equation}
where ${\mathfrak {h}}$ is a fundamental Cartan subalgebra containing 
$X_{\lambda}$ and $\rho_{\mathfrak {l}}$
 is half the sum of positive roots
 for 
$
   \Delta({\mathfrak {l}}_{\mathbb{C}},{\mathfrak {h}}_{\mathbb{C}}).  
$
(This condition is independent of the choice 
 of ${\mathfrak {h}}$ and 
 $
  \Delta^+({\mathfrak {l}}_{\mathbb{C}},{\mathfrak {h}}_{\mathbb{C}}).  
$)

\medskip\medskip

\noindent
Graduate School of Mathematical Sciences, IPMU,\\
the University of Tokyo, Komaba, Meguro, Tokyo, 153-8914 Japan\\
toshi@ms.u-tokyo.ac.jp

\end{document}